\numberwithin{equation}{section} 
\newtheorem{thm}[equation]{Theorem} 
\newtheorem{prop}[equation]{Proposition}
\newtheorem{lemma}[equation]{Lemma}
\newtheorem{cor}[equation]{Corollary}
\newtheorem{claim}[equation]{Claim}
\theoremstyle{definition}
\newtheorem{defin}[equation]{Definition}
\newtheorem{example}[equation]{Example}
\newtheorem{rmk}[equation]{Remark}
\newtheorem{rmks}[equation]{Remarks}
\newcommand{\Q}{\mathbb Q}
\newcommand{\Z}{\mathbb Z}
\newcommand{\Spec}{\operatorname{Spec}}
\newcommand{\Spin}{\operatorname{Spin}}
\newcommand{\Orth}{\operatorname{O}}
\newcommand{\Gr}{\operatorname{Gr}}
\newcommand{\Char}{\operatorname{char}}
\newcommand{\GL}{\operatorname{GL}}
\newcommand{\G}{\mathbb G}
\renewcommand{\P}{\mathbb P}
\newcommand{\C}{\mathbb C}
\newcommand{\bbR}{\mathbb R}
\newcommand{\A}{\mathbb A}
\newcommand{\ed}{\operatorname{ed}}
\newcommand{\trdeg}{\operatorname{trdeg}}
\newcommand{\mc}[1]{\mathcal{#1}}
\newcommand{\cl}{\overline}
\newcommand{\set}[1]{\{#1\}}
\newcommand{\on}[1]{\operatorname{#1}}
\newcommand{\ang}[1]{\left \langle{#1}\right \rangle}
\author{Zinovy Reichstein}
\address{Department of Mathematics\\
	University of British Columbia\\
	Vancouver, BC V6T 1Z2\\Canada}
\email{reichst@math.ubc.ca}
\thanks{Zinovy Reichstein was partially supported by
	National Sciences and Engineering Research Council of
	Canada Discovery grant 253424-2017.}
\author{Federico Scavia}
\address{Department of Mathematics\\
	University of California\\
	Los Angeles, CA 90095-1555 \\ USA}
\email{scavia@math.ucla.edu} 
\thanks{Federico Scavia was partially supported by a graduate fellowship from the University of British Columbia.}
\subjclass[2020]{14L30, 20G15, 13A18}
\keywords{Essential dimension, algebraic group, group action, torsor, complete intersection.}
\title[Essential dimension and specialization II]{The behavior of essential dimension under specialization II}
\begin{document}
	
	\begin{abstract} Let $G$ be a linear algebraic group over a field. We show that, under mild assumptions, in a family of primitive generically free $G$-varieties over a base variety $B$ the essential dimension of the geometric fibers may drop on a countable union of Zariski closed subsets of $B$
	and stays constant away from this countable union. 
	We give several applications of this result.
	\end{abstract} 
	
	\maketitle
	
	\section{Introduction}
	
	Let $X$ be a complex algebraic variety (that is, a separated reduced $\mathbb{C}$-scheme of finite type) equipped with a faithful action of a finite group $G$. We will refer to $X$ as a $G$-variety. Assume that the $G$-action on $X$ is primitive, that is, $G$ transitively permutes the irreducible components of $X$. In this paper we will be interested in the essential dimension $\ed_{\C}(X; G)$ and how it behaves in families. Essential dimension is an integer-valued birational invariant of the $G$-variety $X$; its definition can be found in Section~\ref{sect.prelim1}. When the group $G$ is clear from the context, we will simply write $\ed_{\C}(X)$ for $\ed_{\C}(X; G)$.
	
	To date the study of essential dimension has been primarily concerned with understanding versal $G$-varietes 
	(once again, see Section~\ref{sect.prelim1} for the definition). A complete versal $G$-variety $X$ has the following special property: 
	$X$ has an $A$-fixed rational point for every abelian subgroup $A \subset G$; see~\cite[Corollary 3.21]{merkurjev2009essential}.   
    At the other extreme are complete $G$-varieties $X$, where the action of $G$ is free, i.e., no non-trivial element has a fixed point.  
    Existing methods for proving lower bounds on $\ed_{\C}(X)$ usually fail here; 
    we are aware of only a small number of interesting examples of finite group actions, where $\ed_{\C}(X)$ has been computed in this setting.  
	
	One such family concerns the action of $G = (\Z/ p \Z)^n$ on the product of elliptic curves $X =E_1 \times \ldots \times E_n$ 
	over $\C$.	Here $p$ is a prime; the generator of the $i$th copy of $\Z/ p \Z$ acts on $E_i$ 
	via translation by a point $x_i \in E_i(\C)$ of order $p$, and trivially on $E_j$ for $j \neq i$.
	J.-L.~Colliot-Th\'el\`ene and O.~Gabber~\cite[Appendice]{colliot2002exposant} showed that for a very general 
	choice of the elliptic curves $E_i$, a certain degree $n$ cohomological invariant of $G$ does not vanish on 
	$\C(X)^G$. This implies that $\ed_{\C} (X) = n = \ed_{\C}(G)$.	
	Additional examples can be found in recent work of B.~Farb, M.~Kisin and J.~Wolfson~\cite{fkw, fkw2} and
	N. Fakhruddin and R.~Saini~\cite{fakhruddin2021finite}. Some arise as 
	congruence covers of Shimura varieties, others from actions of subgroups of $X[p] \simeq (\Z/ p \Z)^{2n}$
	on a complex abelian variety $X$ (not necessarily a product of elliptic curves).

	In the present paper we will work in a more general setting, where $G$ is a linear algebraic group (not necessarily finite) over an algebraically closed field $k$. Essential dimension and versality make sense in this more general setting, 
	provided that we require our $G$-actions to be generically free and not just faithful; see Section~\ref{sect.prelim1}.	
	
	If $\dim(G)>0$, by Borel's Fixed Point Theorem $G$ cannot act freely on a complete variety. Nevertheless, the notion of a free action of a finite group
	on a projective variety can be generalized to the case of an arbitrary linear algebraic group $G$ as follows: 
    a generically free primitive $G$-variety $X$ is said to be \emph{strongly unramified} 
    if $X$ is birationally $G$-equivariantly isomorphic to the total space $X'$ of a $G$-torsor $X'\to P$ over 
    some smooth projective irreducible $k$-variety $P$. 

Our first main result is the following.
	
	\begin{thm} \label{thm.free}
		Let $G$ be a linear algebraic group over an algebraically closed field
		$k$ of good characteristic (see Definition~\ref{assume}) and of infinite transcendence degree over its prime field, and let $X$ be a generically free primitive $G$-variety. 
		Then there exist a strongly unramified $G$-variety $Y$ such that $\dim(Y) = \dim(X)$ and $\ed_k(Y) = \ed_k(X)$.
	\end{thm}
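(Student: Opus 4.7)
Set $d := \ed_k(X)$. Since $\ed_k(X) \le \trdeg_k k(X)^G = \dim(X) - \dim(G)$, the integer $n := \dim(X) - d - \dim(G)$ is non-negative. The plan is to construct a $G$-torsor $T \to P$ with $P$ smooth projective of dimension $d$ and $\ed_k(T) = d$, and then set $Y := T \times \P^n$, the $G$-action being trivial on the $\P^n$ factor. Then $Y$ is a $G$-torsor over $P \times \P^n$, hence strongly unramified, of dimension $d + \dim(G) + n = \dim(X)$, and the invariance of essential dimension under purely transcendental extensions yields $\ed_k(Y) = \ed_k(T) = d = \ed_k(X)$.

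\textbf{Construction of $(T,P)$.} Fix a generically free linear $G$-representation $V$, let $V^\circ \subseteq V$ denote the free locus, and choose a smooth projective $k$-variety $P_0$ compactifying $V^\circ/G$ such that the torsor $V^\circ \to V^\circ/G$ extends to a strongly unramified torsor $T_0 \to P_0$ --- this is available after equivariant resolution of singularities in good characteristic and possibly enlarging $V$, and the resulting torsor is versal, so $\ed_k(T_0) = \ed_k(G) \ge d$. Consider a ``rich'' family $\mathcal{P} \to B$ of smooth projective $d$-dimensional subvarieties of $P_0$ (for instance, complete intersections of general sections of a sufficiently positive multiple of an ample line bundle on $P_0$), and the associated pullback family $\mathcal{T} := T_0 \times_{P_0} \mathcal{P} \to B$ of strongly unramified $G$-torsors. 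The main specialization theorem of the paper, applied to $\mathcal{T} \to B$, produces a countable union $S \subseteq B$ of proper closed subvarieties on whose complement $\ed_k(\mathcal{T}_b)$ is constant, say equal to $d'$. One has $d' \le d$ because $\dim \mathcal{T}_b = d + \dim(G)$. The reverse inequality $d' \ge d$ follows from the versality of $T_0$: there is a $d$-dimensional smooth projective subvariety $Z \subseteq P_0$ for which the pullback $T_0|_Z \to Z$ carries a cohomology class equivalent to that of $X$, whence $\ed_k(T_0|_Z) = d$; provided $Z$ appears in the family $\mathcal{P} \to B$, the specialization theorem propagates the bound $\ed_k \ge d$ from $Z$ to every very general fiber. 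Since $k$ has infinite transcendence degree over its prime field, a $k$-point $b \in B(k) \setminus S$ then exists, and we set $(T, P) := (\mathcal{T}_b, \mathcal{P}_b)$.

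\textbf{Main obstacle.} The principal technical difficulty is the lower bound $d' \ge d$. It requires exhibiting a $d$-dimensional smooth projective subvariety $Z \subseteq P_0$ whose pullback of $T_0$ captures the cohomology class of $X$, and arranging that $Z$ lies in the family $\mathcal{P} \to B$. Both pieces rely on a careful match between the versality of $T_0$ and the flexibility of the family $\mathcal{P}$, and it is precisely here that the good characteristic hypothesis (via equivariant resolutions of singularities) and the assumption that $k$ has infinite transcendence degree over its prime field (to guarantee $k$-rational very general points of $B$) play decisive roles. Once this lower bound is in place, the specialization theorem handles the propagation and delivers the strongly unramified $Y$ with the desired dimension and essential dimension.
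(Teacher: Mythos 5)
Your construction collapses at its first step: for a nontrivial \emph{finite} group $G$ (and the finite case is the essential one) there is no smooth projective compactification $P_0$ of $V^\circ/G$ over which the versal torsor $V^\circ \to V^\circ/G$ extends to a $G$-torsor $T_0 \to P_0$. The quotient $V^\circ/G$ is unirational, so in characteristic zero any smooth projective model $P_0$ is simply connected by Serre's theorem on unirational varieties (more generally, rationally connected smooth projective varieties have trivial \'etale fundamental group); hence every $G$-torsor over $P_0$ is split, and its generic fiber is trivial, in particular not versal. Equivariant resolution of singularities --- which in any case is not available in positive characteristic, and is not what ``good characteristic'' in Definition~\ref{assume} provides --- only yields a smooth projective $G$-equivariant compactification of $V^\circ$ whose boundary contains non-free points, so it never produces a torsor over the compactified quotient. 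This is exactly the delicacy of the theorem: the base of a strongly unramified torsor of large essential dimension cannot be rationally connected. The paper's construction (\Cref{complete-int}) therefore never extends a torsor over an ambient projective variety; it cuts $\P(V^r)/G$ (for $r$ larger than the fiber dimension $e$) by general linear sections of dimension $e$, which \emph{avoid} the non-free locus because that locus has codimension $\geqslant r > e$, so the torsor structure exists only over the complete-intersection fiber itself. Your variant, restricting a globally extended $T_0$ to complete intersections, cannot be repaired, since positive-dimensional complete intersections always meet the boundary divisor of $V^\circ/G \subset P_0$, where the torsor is ramified.

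Even granting a family of strongly unramified torsors, the step you flag as the ``main obstacle'' is precisely where the paper's work lies, and your sketch does not supply it. A subvariety $Z$ capturing the class of a compression of $X$ will not occur among general members of a complete-intersection family; the paper uses \Cref{grass2} to choose a linear section through prescribed smooth points of $X_0/G$ so that $X_0$ becomes a union of irreducible components of one special fiber $\mc{X}_{b_0}$, and since that fiber is then not primitive, the plain specialization result (\Cref{verygen}) does not apply --- the refined variant \Cref{verygen-sing} is needed to propagate the lower bound $\ed \geqslant \ed_k(X)$ to a very general fiber. In addition, for $\dim(G) > 0$ the paper first reduces the structure group to a finite subgroup $S$ with $H^1(K,S) \to H^1(K,G)$ surjective (\cite{chernousov2006resolving}, which is where good characteristic enters) and passes back via $\mc{X} \times^S G$; some such reduction is unavoidable if one wants free actions on projective complete intersections. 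Your final dimension adjustment by a trivial $\P^n$ factor is correct and is also how the paper reduces to the incompressible case, but the core of your argument does not stand.
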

	Applying \Cref{thm.free} to a versal $G$-variety $X$, we obtain a strongly unramified $G$-variety $Y$ of maximal essential dimension, i.e. such that $\ed_k(Y)=\ed_k(G)$. When $G$ is finite, $Y$ is itself smooth and projective.
    Thus by starting with an incompressible $G$-variety $X$, we obtain examples   
    analogous to those of Colliot-Th\'el\`ene--Gabber, Farb--Kisin--Wolfson and Fakhruddin--Saini for an arbitrary finite group $G$. 
    Note however, that Farb-Kisin-Wolfson and Fakhruddin-Saini produce examples over $k = \overline{\Q}$, whereas Theorem~\ref{thm.free} 
    requires $k$ to be of infinite transcendence degree over the prime field.
	
	Our proof of~\Cref{thm.free} will rely on Theorems~\ref{verygen} and~\ref{complete-int} below, which are of independent interest.
	
	\begin{thm}\label{verygen}
		Let $G$ be a linear algebraic group over a field $k$ of good characteristic. Let $B$ be a noetherian $k$-scheme, $f \colon \mc{X}\to B$ be a flat separated $G$-equivariant morphism of finite type such that $G$ acts trivially on $B$ and the geometric fibers of $f$ are generically free and primitive $G$-varieties (in particular, reduced). Then for any fixed integer $n \geqslant 0$ the subset of $b\in B$ such that $\ed_{k(\cl{b})}(\mc{X}_{\cl{b}};G_{k(\cl{b})})\leqslant n$ for every (equivalently, some) geometric point $\cl{b}$ above $b$ is a countable union of closed subsets of $B$.
		
		Furthermore, assume that $k$ is algebraically closed of infinite transcendence degree over its prime field. (In particular, these 
		conditions are satisfied if $k$ is algebraically closed and uncountable.) Let $m\geqslant 0$ be the maximum of $\ed_{k(\cl{b})}(\mc{X}_{\cl{b}};G_{k(\cl{b})})$, where $\cl{b}$ ranges over all geometric points of $B$. Then the set of those $b\in B(k)$ such that $\ed_k(\mc{X}_b;G)=m$ is Zariski dense in $B$.
	\end{thm}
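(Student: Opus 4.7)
The plan is to prove part (1) by parametrizing compressions through $G$-equivariant Hilbert schemes, and to deduce part (2) via a generic-point argument exploiting the transcendence-degree hypothesis.

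For part (1), I use the characterization that $\ed_{k(\cl b)}(\mc{X}_{\cl b}; G_{k(\cl b)}) \leqslant n$ if and only if $\mc{X}_{\cl b}$ admits a $G$-equivariant dominant rational map to a generically free primitive $G$-variety $Y$ with $\dim Y \leqslant n + \dim G$. After replacing $\mc{X} \to B$ by a $G$-equivariant projective birational model (via equivariant compactification, available in good characteristic) and choosing a $G$-equivariant embedding $\mc{X} \hookrightarrow \P(V)_B$, each compression is encoded by the closure of its graph: a $G$-invariant closed subscheme of $\mc{X}_{\cl b} \times \P(W)_{\cl b}$ mapping birationally to $\mc{X}_{\cl b}$, whose second projection has dimension $\leqslant n + \dim G$ and carries a generically free $G$-action. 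As $W$ ranges over a countable cofinal family of $G$-representations and $P$ over Hilbert polynomials, these graphs are parametrized by the projective $B$-schemes $\on{Hilb}_P^G(\mc{X} \times_B \P(W)_B / B)$, inside of which the ``graph-of-compression'' locus is locally closed. The image of each such locus in $B$ is constructible; combining with upper semicontinuity of essential dimension under specialization, which makes the set $\{b : \ed \leqslant n\}$ closed under specialization, one may replace each constructible image by its closure without enlarging the union, yielding the required countable union of closed subsets of $B$.

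For part (2), apply part (1) with $n = m-1$: the locus $S = \bigcup_i Z_i$ where essential dimension drops below $m$ is a countable union of closed subsets, each a proper subset of each irreducible component of $B$ by the definition of $m$. Given a nonempty open $V \subseteq B$, the countably many data ($B$, $V$, $\mc{X}$, the $G$-action, and all $Z_i$) are defined over some countable subfield $k_0 \subseteq k$. Since $k$ is algebraically closed of infinite transcendence degree over $k_0$, one can pick $\dim V$ elements of $k$ algebraically independent over $k_0$ and use them as affine coordinates of a $k$-point of $V$ lifting the generic point of $V_{k_0}$; such a point avoids every proper $k_0$-closed subvariety and in particular every $Z_i$. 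This shows that $V(k) \cap (B \setminus S)$ is nonempty for every nonempty open $V \subseteq B$, giving the density assertion.

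The principal obstacle is justifying the upper-semicontinuity of essential dimension invoked in part (1). When specializing a compression along a discrete valuation ring in $B$, the scheme-theoretic limit of graphs in the proper Hilbert scheme is controlled in dimension and birationality by flatness, but generic freeness of the $G$-action on the limit of the target is an a priori open condition that need not survive. Securing it will require an extra argument, perhaps an equivariant resolution step for the limiting target or an appeal to specialization results from the companion paper ``Essential dimension and specialization I'', and this is the place where the good-characteristic assumption on $k$ is essential, beyond its earlier use in the equivariant compactification reduction.
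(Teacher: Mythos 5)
Your proposal stalls exactly at the point where the real content of the theorem lies. For part (1), both your route and the paper's need the Specialization Property: if $b'$ is a specialization of $b$, then $\ed$ of the geometric fiber can only drop, so that $\{b : \ed \leqslant n\}$ is stable under specialization. You flag this as an ``obstacle'' and suggest it might follow from ``an equivariant resolution step'' or from the companion paper, but you do not prove it, and it is not a citable black box in the form you need: in the paper this is Proposition~\ref{ed-dvr}, whose proof takes all of Section~\ref{sect.specialization} --- one must pass to a fiberwise free locus over the DVR, form the quotient (an algebraic space, by Artin's theorem) to realize $X$ as a $G$-torsor, slice the base by a suitable DVR $A$ so as to reduce to a torsor over a DVR with controlled residue and fraction fields, extend scalars and use Cohen's structure theorem to arrange $k \subset R$, and only then invoke \cite[Theorem 6.4]{specialization1}; this is also precisely where the good-characteristic hypothesis enters. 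Without this your part (1) establishes only that the locus is a countable union of constructible sets. (Two smaller omissions: the ``for every/some geometric point'' equivalence requires the Rigidity Lemma \ref{rigidity}, which you never address; and your criterion $\dim Y \leqslant n + \dim G$ does not match this paper's definition, where $\ed_k(X)$ is the minimal $\dim Y$ itself.) That said, your Hilbert-scheme parametrization of compressions is a genuinely different mechanism for the countability step than the paper's, which instead descends the whole family to a field $k_0$ finitely generated over the prime field, where $B_0$ has only countably many points, and then checks closure under specialization; your mechanism would plausibly work once the specialization input is supplied, but it does not replace it.

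Part (2) also has a gap in the descent step. You take the closed sets $Z_i$ produced over $k$ and observe they are defined over ``some countable subfield $k_0$,'' then choose coordinates algebraically independent over $k_0$. But the hypothesis is only that $k$ has infinite transcendence degree over its \emph{prime} field; $k$ may itself be countable (e.g.\ $k = \overline{\Q(t_1,t_2,\dots)}$, exactly the kind of field the theorem is designed to cover, as the introduction's comparison with Farb--Kisin--Wolfson makes clear), and then a countable $k_0$ can have $\trdeg(k/k_0)=0$, so your independent elements need not exist. The paper avoids this by fixing the order of operations: first descend $f$, $B$, $G$ to a single finitely generated $k_0$ by noetherian approximation, apply part (1) \emph{over $k_0$} so that all the countably many closed subsets live in $B_0$ and are defined over that one finitely generated field, and then use Lemma~\ref{mumford} to produce $k$-points of $B$ over the generic point of $B_0$. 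Your argument can likely be repaired by noting that your Hilbert-scheme loci and their images are already defined over the finitely generated field of definition of the family (so all $Z_i$ are pullbacks from $B_0$), but as written the ``countable subfield'' formulation does not suffice.
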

	
	Informally Theorem~\ref{verygen} can be restated as follows: in a family of $G$-varieties $\mc{X}\to B$, the essential dimension of the geometric fibers drops on a countable union of Zariski closed subsets of $B$, and stays constant away from this countable union. Several remarks concerning Theorem~\ref{verygen} are in order.
	
	\begin{rmks} \label{rmks.verygen}
		\begin{enumerate}
			\item The assumption that the $G$-action on every geometric fiber $\mc{X}_{b}$ of $f$ is generically free 
			and primitive ensures that $\ed_{k(b)}(\mc{X}_{b})$ is well defined.
			
			\item  The countable union in the statement of \Cref{verygen}(a) cannot be replaced by a finite union in general; see Example~\ref{ex.general}. 
			
			\item The assumption that $f$ is flat is necessary; see~\Cref{flatness-necessary}. On the other hand, this assumption is rather mild. For example, when $\mc{X}$ and $B$ are smooth $k$-varieties, by ``miracle flatness", $f$ is flat if and only if all its fibers have the same dimension; see \cite[Theorem 23.1]{matsumura1989commutative}. In the applications, one is usually interested in showing that the maximal value of $\ed_k(\mc{X}_b)$ is attained at a very general point $b\in B(k)$. This can be done under a weaker flatness assumption 
			on $f$; see \Cref{verygen-sing}. 
			
			\item If $k$ is not algebraically closed, then the $k$-points $b \in B(k)$ such that $\ed_k(\mc{X}_b) \leqslant n$ 
			do not necessarily lie on a countable union of closed subvarieties of $B$; see~\Cref{alg.closure-necessary}. In other words, \Cref{verygen} 
			may fail if we consider fibers of arbitrary closed points instead of just geometric fibers.
			
		\end{enumerate}
	\end{rmks}

	Our proof of \Cref{verygen} proceeds as follows. First we choose a subfield $k_0\subset k$ finitely generated over $\Q$, such that $G=G_0\times_{\Spec(k_0)} \Spec(k)$, $f=f_0\times_{\Spec(k_0)} \Spec(k)$, and the assumptions of \Cref{verygen} hold for $k_0$, $G_0$ and $f_0:\mc{X}_0\to B_0$.
	Then using arguments inspired by Gabber's appendix~\cite{colliot2002exposant} we reduce \Cref{verygen} to 
   the Specialization Property (\Cref{ed-dvr}) and the Rigidity Property (\Cref{rigidity}).

	Note that the Rigidity Property may fail if $k$ is not algebraically closed. This is the reason why in~\Cref{verygen} we 
	only consider the geometric fibers; see Remark \ref{rmks.verygen}(4).	
		
	\begin{thm}\label{complete-int}
		Let $k$ be an infinite field, $G$ be a finite group, and let $X_0$ be an equidimensional generically free $G$-variety of dimension 
		$e\geqslant 1$ (not necessarily primitive). 
		Then there exist a smooth irreducible $k$-variety $B$, a smooth 
		irreducible $G$-variety $\mc{X}$ and a smooth $G$-equivariant morphism $f \colon \mc{X}\to B$ 
		of constant relative dimension $e$ defined over $k$ such that:
		\begin{enumerate}[label=(\roman*)]
			\item $G$ acts trivially on $B$ and freely on $\mc{X}$,
			\item there exist $b_0 \in B(k)$ such that $X_0$ is $G$-equivariantly birationally isomorphic to a union of irreducible components of $\mc{X}_{b_0}$, 
			\item there exists a dense open subscheme $U \subset B$ such that for every $b\in U$ the fiber $\mc{X}_b$ is smooth, 
			projective and geometrically irreducible.
		\end{enumerate}
		
			In particular, for any a geometric point $b$ of $U$, the $G$-action on the fiber $\mc{X}_b$ is strongly unramified.
	\end{thm}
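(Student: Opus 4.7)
The approach will be to realize a smooth projective birational model of $X_0$ carrying a free $G$-action as a union of components of a fiber in a linear family of $G$-invariant complete intersections inside a projective space $\P(V)$ on which $G$ acts freely outside a closed locus of high codimension. First, fix a faithful $G$-representation $W$ and set $V := W^{\oplus m}$ for $m$ sufficiently large. For each nontrivial $g \in G$, one has $\on{codim}_V(V^g) = m \cdot \on{codim}_W(W^g) \geqslant m$, so the ramification locus $R := \bigcup_{1 \ne g \in G} \P(V^g) \subset \P(V)$ satisfies $\on{codim}_{\P(V)}(R) > e = \dim X_0$ once $m \gg 0$; write $\P(V)^{\on{free}} := \P(V) \setminus R$, on which $G$ acts freely. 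Using that $X_0$ is generically free and equidimensional of dimension $e$, I would produce a smooth projective $G$-variety $Y_0$, birational to $X_0$, on which $G$ acts freely (via equivariant resolution of singularities and further blow-ups removing the non-free locus if needed), and embed $Y_0 \hookrightarrow \P(V)^{\on{free}}$ in sufficiently general position that the image lies entirely in the free locus; this uses $\on{codim}_{\P(V)}(R) > \dim Y_0$.

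Set $c := \dim \P(V) - e$, and let $R_d$ denote the space of $G$-invariant homogeneous polynomials of degree $d$ on $V$, with $L_d \subset R_d$ the subspace vanishing on $Y_0$. For $d \gg 0$, elements of $L_d$ generate the ideal sheaf of $Y_0$; a generic tuple $(F_1, \ldots, F_c) \in L_d^c$ then cuts out a complete intersection $V(F_1, \ldots, F_c) \subset \P(V)$ of pure dimension $e$ having $Y_0$ as a union of irreducible components, with residual components (of dimension $e$) that, by the codimension bound on $R$ and generic position, also avoid $R$. Take $B_0 := R_d^c$ (an affine space with trivial $G$-action, as it parametrizes $G$-invariants) and $B \subset B_0$ the open subscheme of tuples defining a complete intersection of pure dimension $e$ contained in $\P(V)^{\on{free}}$; let $b_0 \in B(k)$ correspond to the chosen tuple. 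Define the total space
\[
  \mc{X} := \bigl\{(x,b) \in \P(V)^{\on{free}} \times B : F_1^b(x) = \cdots = F_c^b(x) = 0 \bigr\},
\]
with $f \colon \mc{X} \to B$ the second projection. For $d$ large enough that elements of $R_d$ have no base points on $\P(V)^{\on{free}}$, the other projection $\mc{X} \to \P(V)^{\on{free}}$ realizes $\mc{X}$ as an affine bundle, so $\mc{X}$ is smooth and irreducible of constant relative dimension $e$ over $B$; $G$ acts freely on $\mc{X}$ because it is free on $\P(V)^{\on{free}}$ and trivial on $B$. This yields (i), and the fiber over $b_0$ contains $Y_0$ as a union of irreducible components, giving (ii).

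For (iii), a Bertini-type argument for generic $G$-invariant complete intersections of sufficiently high degree, combined with an irreducibility/connectedness result for complete intersections of dimension $\geqslant 1$ and codimension $c$ in projective space (e.g.\ via a Fulton--Hansen style theorem), shows that for $b$ in an open dense $U \subset B$ the fiber $\mc{X}_b$ is smooth, projective, and geometrically irreducible; the hypothesis that $k$ is infinite then ensures $U(k) \ne \emptyset$. Strong unramifiedness at a geometric $b \in U$ follows from the free $G$-action on the smooth projective $\mc{X}_b$, via the $G$-torsor $\mc{X}_b \to \mc{X}_b/G$. The most delicate step will be the construction of the smooth projective model $Y_0$ with free $G$-action and its embedding as a closed subvariety of $\P(V)^{\on{free}}$, together with the simultaneous control of the residual components of $V(F_1, \ldots, F_c)$; all of these issues are handled by enlarging $m$ so that $\on{codim}_{\P(V)}(R) > e$, combined with generic-position arguments for the embedding of $Y_0$ and for the defining forms $F_i$.
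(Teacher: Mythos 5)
Your plan hinges on the very first step: ``produce a smooth projective $G$-variety $Y_0$, birational to $X_0$, on which $G$ acts freely.'' This is a genuine gap, and in fact the step is impossible in general. First, equivariant resolution of singularities is not available over an arbitrary infinite field (the theorem makes no assumption on $\Char(k)$). More fundamentally, even over $\C$ no such $Y_0$ need exist: having a fixed point for an abelian subgroup $A \subset G$ is a birational invariant of smooth \emph{complete} $G$-varieties, so if $X_0$ is, say, a faithful linear representation of $G$ (a versal $G$-variety, which is exactly the input needed for the application to Theorem~\ref{thm.free}), then \emph{every} smooth projective model of $X_0$ has $A$-fixed points for every abelian $A \subset G$ (see \cite[Corollary 3.21]{merkurjev2009essential}, quoted in the introduction); blow-ups cannot remove the non-free locus. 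This is precisely why the theorem is stated the way it is: $X_0$ is only required to appear birationally as a union of components of the \emph{special} fiber $\mc{X}_{b_0}$ (which is neither projective nor claimed to be), while the smooth projective fibers with free action occur over a dense open $U$ that need not contain $b_0$. The paper's proof never produces a free projective model of $X_0$; it embeds the possibly singular $X_0$ itself $G$-equivariantly into $\P(V^r)$ (with $r$ large so that the non-free locus has codimension $> e$), chooses one smooth point with trivial stabilizer on each irreducible component, passes to the geometric quotient $\P(V^r)/G \hookrightarrow \P^n$, and uses a transversality/regular-sequence argument (Lemmas~\ref{simultaneous-regular}--\ref{grass2} and \ref{grass-flat}) to find a linear slice $W_0$ recovering $X_0/G$ locally at those points; the family is then the preimages in $\P(V^r)$ of the linear sections $W \cap (\P(V^r)/G)$, with the non-free locus and the relative singular locus removed from the total space.

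A second, repairable but unaddressed, issue: your $f \colon \mc{X} \to B$ is not smooth. The projection $\mc{X} \to \P(V)^{\mathrm{free}}$ being a linear-space bundle makes the total space $\mc{X}$ smooth and irreducible, but says nothing about smoothness of the second projection; fibers over points of your $B$ (tuples cutting out a pure-dimensional complete intersection inside the free locus) can perfectly well be singular, and the theorem demands $f$ smooth of relative dimension $e$. To fix this one must delete the relative singular locus of $f$ from $\mc{X}$ and then verify that the fiber over $b_0$ still contains a dense open piece of each component of the special variety, i.e.\ that the chosen invariant forms cut out that variety transversely at a general point of each component. This is exactly the role of the regular-sequence/transversality lemmas in the paper, and it is the place where the choice of the points $x_i$ with trivial stabilizers enters; your proposal treats it as a genericity afterthought, but combined with the non-existence of $Y_0$ it means the argument as written does not go through.
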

	
Our proof of \Cref{complete-int} can be found in Section~\ref{sect.family}. It was motivated by J.-P.~Serre's construction of a smooth projective $n$-dimensional complete intersection with a free $G$-action, for an arbitrary finite group $G$ and an arbitrary positive integer $n$; 
see~\cite[Proposition 15]{serre58charp}. \Cref{thm.free} is then deduced from		
Theorems~\ref{verygen} and~\ref{complete-int} in Section~\ref{sect.thm.free}.

This paper is a sequel to~\cite{specialization1}. The main result of~\cite{specialization1} is used in the proof of~\Cref{ed-dvr} 
(the specialization property of essential dimension). Other than that, this paper can be read independently of~\cite{specialization1}.

	\section{Notation and preliminaries}
	\label{sect.prelim}
	
	\subsection*{Group actions and essential dimension}
	\label{sect.prelim1}
		Let $k$ be a field, $\cl{k}$ be an algebraic closure of $k$, $G$ be a linear algebraic group over $k$, 
and $X$ be a $G$-variety, i.e., a separated reduced $k$-scheme of finite type endowed with a $G$-action over $k$. We will say 
that the $G$-variety $X$ is primitive if $X\neq \emptyset$, $G(\cl{k})$ transitively permutes the irreducible components of $X_{\cl{k}}:=X\times_k\cl{k}$, and 
generically free if there exists a dense open subscheme $U\subset X$ such that for every $u\in U$ the scheme-theoretic stabilizer 
$G_u$ of $u$ is trivial. 

 By a $G$-compression of $X$ we will mean a dominant $G$-equivariant rational map $X \dashrightarrow Y$, where the $G$-action on $Y$ is again
generically free and primitive. The essential dimension of $X$, denoted by $\ed_k(X; G)$, or $\ed_k(X)$ if $G$ is clear from the context, is defined as the minimal value of $\dim(Y)$, 
where the minimum is taken over all $G$-compressions $X \dashrightarrow Y$. The essential dimension $\ed_k(G)$ of the group $G$ 
is defined as the supremum of $\ed_k(X)$, where $X$ ranges over all faithful primitive $G$-varieties.
	
A $G$-variety $X$ is called weakly versal if every generically free
primitive $G$-variety $T$ admits a $G$-equivariant rational map $T\dashrightarrow U$. We will say that $X$ is versal if every
dense open $G$-invariant subvariety $U \subset X$ is weakly versal.

\subsection*{Good characteristic}

\begin{defin} \label{assume} Let $G$ be a linear algebraic group defined over a field $k$. We will say that $G$ is in {\em good characteristic} 
if
	\begin{itemize}
	    \item either $\on{char}(k)=0$, or
	    \item $\on{char}(k)=p>0$, 
	    $G^{\circ}$ is smooth reductive and there exists a finite subgroup $S\subset G(\cl{k})$ of order prime to $p$ such that the induced map $H^1(K,S)\to H^1(K,G)$ is surjective for every field extension $K/\cl{k}$, or
	    \item $G$ is a finite discrete group, and if $\on{char}(k)=p>0$ then the only normal $p$-subgroup of $G$ is the trivial subgroup (that is, $G$ is weakly tame in the sense of \cite{brosnan2018essential}). 
	\end{itemize} 
	\end{defin}

Here are two large families of examples in prime characteristic.
	
\begin{example} \label{ex.good-char} Suppose $G$ is a smooth group over a field $k$ of characteristic $p > 0$. Assume that the connected component $G^0$ of $G$
is reductive. Let $T$ be a maximal torus in $G^0$, $r = \dim(T) \geqslant 0$, and $W = N_G(T)/T$ be the Weyl group. If
 
 \smallskip
(a) $G$ is split and defined over $\Spec(\mathbb Z)$ and $p$ does not divide $2^r |W|$, or

\smallskip
(b) $G$ is connected and $p$ does not divide $|W|$,

\smallskip \noindent
then $G$ is in good characteristic. For a proof of (a), see~\cite[Proposition 5.1]{specialization1}. For a proof of (b), 
see~\cite[Theorem 1.1(c)]{chernousov2006resolving} and~\cite[Remark 4.1]{chernousov2008reduction}.
 \end{example}
 
The following example shows that conditions (a) and (b) above can sometimes be relaxed. 

\begin{example}
The split orthogonal group $\on{O}_n$, special orthogonal group $\on{SO}_n$ and the spin group $\on{Spin}_n$ over a field $k$
are in good characteristic as long as $\on{char}(k) \neq 2$. Indeed, let $S$ be the group of diagonal $n \times n$ matrices of the form $\on{diag}(\epsilon_1, \ldots, \epsilon_n)$,
where each $\epsilon_i = \pm 1$, $S_0 = S \cap \on{SL}_n$, and $\tilde{S}$ be the preimage of $S_0$ under the natural map $\on{Spin}_n \to \on{SO}_n$.
Then $|S| = |\tilde{S}| = 2^n$, $|S_0| = 2^{n-1}$, and if $\on{char}(k) \neq 2$, then the natural maps 
\[ H^1(K, S) \to H^1(K, \on{O}_n), \; \; H^1(K, S_0) \to H^1(K, \on{SO}_n), \; \; \text{and} \; \; H^1(K, \tilde{S}) \to H^1(K, \Spin_n) \]
are all surjective. The surjectivity of the first two maps follows from the fact that every quadratic form over a field of characteristic $\neq 2$ can be diagonalized. The surjectivity of the third map is proved in \cite[Lemma 13.2]{brosnan2007-arxiv}.
\end{example}

	\section{Specialization Property}
	\label{sect.specialization}
	The purpose of this section is to prove the following specialization property of essential dimension.
	
	\begin{prop}\label{ed-dvr}
	Let $k$ be a field, $R$ be a discrete valuation ring containing $k$ and with residue field $k$, and $l$ be the fraction field of $R$. Let $G$ be a linear algebraic group over a field $k$ of good characteristic. Let $X$ be a flat separated $R$-scheme of finite type endowed with a $G$-action over $R$, whose fibers are generically free and primitive $G$-varieties. 
	Then $\ed_{\cl{l}}(X_{\cl{l}})\geqslant \ed_{\cl{k}}(X_{\cl{k}})$.
	\end{prop}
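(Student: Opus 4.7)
The plan is to reduce this proposition about $G$-varieties to the analogous specialization property for $G$-torsors over a discrete valuation ring, which (as the excerpt indicates) is the main theorem of the companion paper \cite{specialization1}. The bridge is the standard fact that for any generically free primitive $G$-variety $Z$ over a field $F$, one has $\ed_F(Z)=\ed_F(\tau_Z)$, where $\tau_Z$ is the generic $G$-torsor of $Z$: concretely, one chooses a $G$-invariant dense open $V\c Z$ on which $G$ acts freely, takes the geometric quotient $V\to V/G$ (which exists and is a $G$-torsor), sets $K=F(V/G)$, and lets $\tau_Z\in H^1(K,G)$ be the restriction of this torsor to $\Spec(K)$. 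The essential dimension of $\tau_Z$ as a torsor is the minimal $\trdeg_F(K_0)$ over subfields $F\c K_0\c K$ for which $\tau_Z$ descends to $H^1(K_0,G)$.

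Armed with this translation, I would spread the construction out over $R$. Using the flatness of $X\to \Spec(R)$ and the fiberwise generic-freeness hypothesis, one can shrink $X$ to a $G$-invariant dense open $\mc{V}\c X$, flat over $R$ and dominating both fibers, on which $G$ acts freely and for which the geometric quotient $q\colon \mc{V}\to \mc{B}$ exists as an $R$-scheme and is a $G$-torsor whose formation commutes with base change to each fiber. Then $\tau_{X_{\cl{l}}}$ and $\tau_{X_{\cl{k}}}$ are obtained by restricting $q$ to the generic points of $\mc{B}_{\cl{l}}$ and $\mc{B}_{\cl{k}}$, respectively.

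Next, I would localize $\mc{B}$ at a codimension-one point: choose an irreducible component of the special fiber $\mc{B}_k$ whose generic point $\eta$ is dominated by the generic point of $\mc{B}_l$. The local ring $\mc{O}_{\mc{B},\eta}$ is then a DVR $\tilde R$ with fraction field $k(\mc{B}_l)$ and residue field $\kappa(\eta)$, dominating $R$, over which $q$ restricts to a $G$-torsor in the setting of \cite{specialization1}. Applying the main specialization theorem of \cite{specialization1} to this restricted torsor, combined with the identification in the first paragraph, yields the desired inequality $\ed_{\cl{l}}(X_{\cl{l}})\geqslant \ed_{\cl{k}}(X_{\cl{k}})$.

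The main technical obstacle I anticipate is the passage to geometric fibers. The torsor-specialization result in \cite{specialization1} is naturally stated over $R$ with residue field $k$ (not $\cl{k}$), whereas one wants an inequality for $\ed_{\cl{k}}$ and $\ed_{\cl{l}}$. This will require first replacing $R$ by (a localization of) its integral closure in a finite extension of $l$ over which a minimal-dimensional $G$-compression of $X_{\cl{l}}$ is defined, and then tracking essential dimension through both this finite base change and a purely transcendental extension to reach the algebraic closures. A secondary issue is that the geometric quotient $\mc{V}/G$ need not exist globally as an $R$-scheme; this is routine to handle by shrinking $\mc{V}$ further and using that $\ed_F(\cdot)$ is a birational invariant.
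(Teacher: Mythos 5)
Your overall strategy is the same as the paper's: shrink to a fiberwise dense locus where $G$ acts freely, pass to the quotient torsor, reduce to a torsor over a discrete valuation ring, and invoke the specialization theorem for torsors from \cite{specialization1}; you also anticipate and resolve the passage to geometric fibers exactly as the paper does in \Cref{torsor-over-dvr} (a finite extension $l'/l$ over which a minimal compression of $X_{\cl{l}}$ is defined, an extension $R'$ of $R$ with fraction field $l'$, and the trivial inequality $\ed_{k'}(X_{k'})\geqslant \ed_{\cl{k}}(X_{\cl{k}})$).

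Two of your intermediate assertions, however, are where the paper spends most of its effort, and one of them is false as stated. First, the existence of a $G$-invariant open $\mc{V}\subset X$ that is $R$-fiberwise dense and on which $G$ acts freely does not follow formally from fiberwise generic freeness: one must rule out that the closure in $X$ of the non-free locus of the generic fiber swallows an entire irreducible component of the special fiber. The paper proves this in \Cref{fiberdimension,lem.free}, using flatness (every component of $X$ dominates $\Spec R$) and primitivity to show that all components of both fibers have the same dimension, so that the closure of a nowhere dense closed subset of $X_l$ cannot contain a component of $X_k$. The same issue resurfaces when you propose to make the quotient a scheme ``by shrinking $\mc{V}$ further'': the shrinking must again preserve fiberwise density, and the paper instead forms the quotient as an algebraic space (Artin's theorem) and passes to an \'etale cover, which does not change the essential dimension of the geometric fibers (\Cref{torsor-of-schemes}). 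Second, your claim that $\mc{O}_{\mc{B},\eta}$ is a DVR is not true in general: it is a one-dimensional noetherian local domain, but $\mc{B}$ need not be normal (or regular) at $\eta$, and its residue field need not be preserved by any repair. The standard fix is what the paper does in \Cref{torsor-over-dvr}: use \cite[Tag 054F]{stacks-project} to produce a DVR $A$ mapping to the quotient with generic point going to the generic point (inducing an equality of residue fields there) and closed point going to $\eta$, with residue field only algebraic over $\kappa(\eta)$ --- which suffices, since an algebraic residue field extension does not change the geometric special fiber. With these repairs your argument coincides with the paper's proof.
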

	
	 Our proof will be based on reduction to the case where $X$ is a $G$-torsor over $\Spec(R)$. In the latter case the inequality
	 $\ed_{\cl{l}}(X_{\cl{l}})\geqslant \ed_{\cl{k}}(X_{\cl{k}})$ of~\Cref{ed-dvr} is established in~\cite[Theorem 6.4]{specialization1}.

\begin{proof}[Proof of \Cref{ed-dvr}]
Let $k$ be the residue field of $R$, and $l$ be the fraction field of $R$. By assumption, $X_k$ (resp. $X_l$) is a  primitive generically free $G_k$-variety (resp. $G_l$-variety). We fix algebraic closures $\cl{k}$ and $\cl{l}$ of $k$ and $l$, respectively. Our proof will be in several steps.

\begin{claim}\label{fiberdimension}
There exists an integer $d\geqslant 0$ such that the irreducible components of $X_{\cl{k}}$ and of $X_{\cl{l}}$ are all of dimension $d$.
\end{claim}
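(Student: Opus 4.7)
The plan is to read the common dimension $d$ off of the irreducible components of $X$, which are controlled by the generic fiber. Let $d_l$ denote the common dimension of the irreducible components of $X_{\cl{l}}$, which exists because the primitive $G_l$-action permutes these components transitively; similarly define $d_k$. The goal is to show $d_k=d_l$, after which we may take $d=d_l$.

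First, I would show that $X$ is equidimensional of dimension $d_l+1$. Since $X$ is flat over $R$, the uniformizer $\pi$ is a non-zero-divisor on $\mathcal{O}_X$, hence lies in no associated prime of $X$ and in particular in no minimal prime. Consequently every irreducible component $Z$ of $X$, taken with its reduced induced structure, is an integral, finite-type, dominant, flat $R$-scheme. The dimension formula for flat morphisms applied to $Z\to \Spec(R)$ gives $\dim (Z\times_R l)=\dim Z - 1$. The irreducible components of $X_l$ are exactly the closed subschemes $Z\times_R l$, one for each component $Z$ of $X$; after base change to $\cl{l}$, which preserves the dimensions of components of reduced finite-type $l$-schemes, these components all have dimension $d_l$ by primitivity. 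Hence $\dim Z=d_l+1$ for every irreducible component $Z$ of $X$.

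Next, I would transfer this equidimensionality to the special fiber via Krull's Hauptidealsatz. For each component $Z$, every irreducible component of $Z\times_R k=V(\pi)\cap Z$ has codimension one in $Z$, hence dimension $d_l$. A component $C$ of $X_k$ is irreducible, so it lies in some $Z\times_R k$; by the maximality of $C$ as an irreducible closed subset of $X_k$, it must already be an irreducible component of $Z\times_R k$, so $\dim C = d_l$. Base changing to $\cl{k}$ again preserves the dimensions of components, so every component of $X_{\cl{k}}$ also has dimension $d_l$, and taking $d=d_l$ completes the argument.

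No step in this plan looks like a serious obstacle; the proof amounts to careful bookkeeping combining the flat dimension formula, Krull's principal ideal theorem, and the observation that the primitivity hypothesis on the fibers gives equidimensionality, once the irreducible components of $X$ have been pinned down using flatness.
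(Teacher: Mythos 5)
Your overall strategy is the same as the paper's (use flatness to see that every irreducible component of $X$ dominates $\Spec R$, identify the components of $X_l$ with the generic fibers of the components of $X$, and pass between $X_k,X_l$ and $X_{\cl{k}},X_{\cl{l}}$ using invariance of dimension under field extension), but the two dimension-theoretic steps at the heart of your argument are not justified, and the second one is where the real content of the claim lies. First, there is no ``dimension formula for flat morphisms'' giving $\dim(Z\times_R l)=\dim Z-1$ for an arbitrary integral, finite type, flat, dominant $R$-scheme $Z$: for example $Z=\mathbb{A}^n_l=\Spec R[1/\pi][x_1,\dots,x_n]$ is flat, finite type and dominant over $R$ with $\dim Z=\dim Z_l$. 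The standard flatness formula is local ($\dim\mathcal{O}_{Z,z}=\dim R+\dim\mathcal{O}_{Z_k,z}$ for $z$ in the special fiber) and only helps once you know the special fiber of $Z$ is nonempty; nothing in your argument rules out components of $X$ with empty special fiber, so the asserted equidimensionality of $X$ is unproved (and, for such a hypothetical component, false). This part is repairable, since you only ever use a component $Z$ containing a component of $X_k$.

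The more serious gap is the inference ``every component $C$ of $Z\cap V(\pi)$ has codimension one in $Z$ by Krull, hence dimension $\dim Z-1$.'' Codimension one does not imply that the dimension drops by exactly one for integral finite-type schemes over a DVR: such schemes are catenary but not biequidimensional. Concretely, in $Z=\Spec R[x]$ (which is integral, flat, dominant, finite type over $R$, of dimension $2$) the closed subscheme $V(\pi x-1)\cong\Spec l$ has codimension one and dimension zero. So the general principle you invoke is false in exactly this setting, and the fact that components of the \emph{special fiber} do have dimension $\dim Z_l$ is precisely the nontrivial statement being proved; your argument assumes it rather than proving it. The correct justification is the dimension formula over the universally catenary base $R$ (equivalently, the lemma the paper cites, \cite[Tag 0B2J]{stacks-project}: for $Z$ integral, finite type and dominant over $R$ with $Z_k\neq\emptyset$, every irreducible component of $Z_k$ has dimension $\dim Z_l$), applied to the component $Z$ of $X$ whose generic fiber is a component of $X_l$ and whose special fiber contains the given component of $X_k$. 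With that lemma substituted for your Krull step, your bookkeeping goes through and essentially reproduces the paper's proof.
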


For any finite field extensions $k'\supset k$ and $l'\supset l$, there exists a discrete valuation ring $R'\supset R$, finite and free over $R$, such that the residue field of $R'$ contains $k'$ and the fraction field of $R'$ contains $l'$; see \cite[I.4, Proposition 9 and Remark]{serre1979local} and \cite[I.6, Proposition 15]{serre1979local}. Thus, extending $R$ if necessary, we may assume that the irreducible components of $X_k$ (resp. $X_l$) are geometrically irreducible and transitively permuted by $G(k)$ (resp. $G(l)$).

After this reduction, the problem becomes to find $d\geqslant 0$ such that the irreducible components of $X_{k}$ and of $X_{l}$ are all of dimension $d$. Since $G$ acts transitively on the irreducible components of the fibers, it suffices to exhibit one irreducible component of $X_k$ and one irreducible component of $X_l$ of the same dimension.

Since $X$ is $R$-flat, by \cite[Lemma 4.3.7]{liu2002algebraic} every irreducible component of $X$ dominates $\Spec R$. In other words, the open subscheme $X_l\subset X$ is dense. Therefore each irreducible component of $X$ is the closure of an irreducible component of $X_l$. Thus, since $X_k\neq\emptyset$, there exists an irreducible component $X'\subset X$ such that $X'_k$ contains some irreducible component of $X_k$ and such that $X'_l$ is an irreducible component of $X_l$.

The composition $X'\hookrightarrow X\to \Spec R$ is surjective, hence \cite[0B2J]{stacks-project} implies that every irreducible component of $X'_k$ has dimension $\on{dim}(X'_l)$. Since $X'_k$ contains an irreducible component of $X_k$, this completes the proof of \Cref{fiberdimension}.

    \begin{claim}\label{lem.free}
    There exists a $G$-invariant $R$-fiberwise dense open subscheme $U\subset X$ such that $G$ acts freely on $U$.
    \end{claim}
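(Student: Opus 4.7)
The plan is to construct $U$ as the complement of a $G$-invariant closed subset $Z \subset X$ built from the non-free loci of the two fibers. Let $Z_k \subset X_k$ and $Z_l \subset X_l$ denote the (closed) non-free loci of the $G_k$- and $G_l$-actions, respectively. Since the $G$-action on $X_k$ and $X_l$ is generically free, $Z_k$ and $Z_l$ are proper closed $G$-invariant subsets, and by \Cref{fiberdimension} every irreducible component of $X_k$ and $X_l$ has dimension $d$, so $\dim Z_k < d$ and $\dim Z_l < d$.

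Next, I would form the scheme-theoretic closure $\overline{Z_l}$ of $Z_l$ inside $X$. Because $X$ is $R$-flat, no irreducible component of $\overline{Z_l}$ can be supported on the special fiber $X_k$ (each component must meet the dense subset $Z_l \subset \overline{Z_l}$, which lies in $X_l$), so $\overline{Z_l}$ is itself $R$-flat over $R$. Consequently, its special fiber $\overline{Z_l} \cap X_k$ has pure dimension $\dim Z_l < d$. Since $Z_l$ is $G$-invariant, so is its closure $\overline{Z_l}$. Define $Z := Z_k \cup \overline{Z_l}$ and $U := X \setminus Z$; by construction $U$ is an open $G$-invariant subscheme of $X$.

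To check $R$-fiberwise density, I would observe that $U \cap X_l = X_l \setminus Z_l$ is dense in $X_l$, and that $U \cap X_k = X_k \setminus (Z_k \cup (\overline{Z_l} \cap X_k))$ is the complement in $X_k$ of a closed subset of dimension strictly less than $d$. Since every irreducible component of $X_k$ has dimension exactly $d$, this complement is dense. Finally, $G$ acts freely on $U$: any point $u \in U$ lies over either the closed or the generic point of $\Spec(R)$, hence is contained in either $U \cap X_k \subset X_k \setminus Z_k$ or $U \cap X_l \subset X_l \setminus Z_l$, so its stabilizer is trivial by the very definitions of $Z_k$ and $Z_l$.

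The only step of substance is verifying that $\overline{Z_l}$ is $R$-flat and that its special fiber has the expected dimension; this is where the flatness hypothesis on $X$ enters crucially, since without it $\overline{Z_l} \cap X_k$ could swallow an entire irreducible component of $X_k$ and destroy density on the special fiber. Everything else is formal manipulation of $G$-invariance, closures, and complements.
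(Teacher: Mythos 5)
Your argument is correct and is essentially the paper's own proof: both take the closure in $X$ of the non-free locus of the generic fiber, use the equidimensionality of the fibers (\Cref{fiberdimension}) together with the fact that every component of that closure dominates $\Spec(R)$ to conclude that its special fiber has dimension $<d$ and so cannot contain a component of $X_k$, then add the non-free locus of the special fiber and pass to the complement. The only cosmetic difference is that the paper phrases the final freeness check as triviality of the stabilizer group scheme over $U$, verified on $U_l$ and $U_k$, which amounts to your pointwise observation that every point of $U$ lies over either the generic or the closed point of $\Spec(R)$.
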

    
    Since $G_l$ acts generically freely on $X_l$, there exists a closed nowhere dense $G_l$-invariant subscheme $Z\subset X_l$ such that $G_l$ acts freely on $X_l\setminus Z$. Let $W\subset Z$ be an irreducible component, and let $\cl{W}$ be the closure of $W$ in $X$. By \cite[Tag 0B2J]{stacks-project}, either $(\cl{W})_{k}$ is empty, or \[\dim ((\cl{W})_{k})=\dim(W)\leqslant \dim(X_{k})-1.\] It now follows from \Cref{fiberdimension} that $\cl{W}$ does not contain any irreducible component of $X_{k}$. Therefore, the closure $\cl{Z}$ of $Z$ does not contain any irreducible component of $X_{k}$. 
		
	Since $G_k$ acts generically freely on $X_{k}$, there exists a closed nowhere dense $G_{k}$-invariant subscheme $Z'\subset X_{k}$ such that $G_{k}$ acts freely on $X_{k}\setminus Z'$. It follows that $U:=X\setminus (\cl{Z}\cup Z')$ is a fiber-wise dense $G$-invariant open subscheme of $X$, such that $G$ acts freely on $U_l$ and $U_k$. To prove \Cref{lem.free}, it remains to show that $G$ acts freely on $U$, i.e., that the stabilizer $U$-group scheme 
	\[\mc{G}:=U\times_{(U\times_R U)} (G\times_RU) \] 
	is trivial. Here the fibered product is taken over the diagonal morphism $U\to U\times_RU$ and the action morphism $G\times_RU\to U\times_R U$. Since $G$ acts freely on $U_l$ and $U_k$, the $U_l$-group scheme $\mc{G}_l$ and the $U_k$-group scheme $\mc{G}_k$ are both trivial. Hence so is $\mc{G}$, as desired. This proves \Cref{lem.free}.
	
    \begin{claim}\label{torsor-of-schemes}
    For the purpose of proving \Cref{ed-dvr}, we may assume that $X$ is the total space of a $G$-torsor $X\to Y$, where $Y$ is a separated $R$-scheme of finite type.
    \end{claim}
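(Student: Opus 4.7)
My plan is to combine \Cref{lem.free} with the birational invariance of essential dimension and then form the quotient $X/G$ as an algebraic space. By \Cref{lem.free}, there is a $G$-invariant $R$-fiber-wise dense open subscheme $U\subset X$ on which $G$ acts freely. The geometric fibers $U_{\cl{k}}\subset X_{\cl{k}}$ and $U_{\cl{l}}\subset X_{\cl{l}}$ are dense $G$-invariant open subschemes, and any non-empty $G(\cl{k})$-invariant open subscheme of a primitive $G$-variety meets every irreducible component (by transitivity of $G(\cl{k})$) and is itself primitive. Hence $U_{\cl{k}}$ and $U_{\cl{l}}$ remain primitive generically free $G$-varieties, and since essential dimension of such varieties is a birational invariant, $\ed_{\cl{k}}(X_{\cl{k}})=\ed_{\cl{k}}(U_{\cl{k}})$ and $\ed_{\cl{l}}(X_{\cl{l}})=\ed_{\cl{l}}(U_{\cl{l}})$. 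I may therefore replace $X$ by $U$ and assume that $G$ acts freely on $X$.

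Next, since $G$ is a linear algebraic group acting freely on the separated $R$-scheme $X$ of finite type, the fppf sheaf quotient $Y:=X/G$ is a quasi-separated algebraic space of finite type over $R$, and the projection $X\to Y$ is a $G$-torsor. Faithfully flat descent along $X\to Y$ transfers flatness and separatedness from $X$ to $Y$. It remains to find a $G$-invariant open subscheme $X'\subset X$ such that $Y':=X'/G$ is a scheme and such that $X'\cap X_k$ and $X'\cap X_l$ are dense in $X_k$ and $X_l$, respectively; birational invariance of essential dimension will then complete the reduction.

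For this I apply Knutson's theorem (every noetherian algebraic space contains a dense open subscheme) in a two-step fashion. First, let $Y_k^0\subset Y_k$ be a dense open subscheme of the special fiber; since $Y_k$ is closed in $Y$, the subset $\tilde Y:=Y\setminus (Y_k\setminus Y_k^0)$ is an open subspace of $Y$ whose special fiber equals the scheme $Y_k^0$ and whose generic fiber equals $Y_l$. Next, let $Y_l^0\subset Y_l$ be a dense open subscheme of the generic fiber, and let $\overline Z$ be the scheme-theoretic closure in $\tilde Y$ of $Y_l\setminus Y_l^0$; by $R$-flatness of $\tilde Y$ and the fact that $Y_l\setminus Y_l^0$ has strictly smaller dimension than $Y_l$, the intersection $\overline Z\cap Y_k^0$ has strictly smaller dimension than $Y_k^0$. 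Taking $Y'$ to be the scheme locus of $\tilde Y\setminus \overline Z$ then yields an open subscheme of $Y$ meeting both fibers densely, and $X':=X\times_Y Y'$ is the desired $G$-torsor over the separated $R$-scheme $Y'$ of finite type.

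The main obstacle is the final algebraic-space-vs-scheme step: an algebraic space whose fibers over $R$ are schemes is not automatically a scheme, so one must verify that the scheme locus of $\tilde Y\setminus\overline Z$ is dense in each of its fibers over $R$. I expect this to follow from the interaction of flatness with the openness and density of the scheme locus in a noetherian algebraic space, but it is the only essentially non-formal ingredient in the reduction.
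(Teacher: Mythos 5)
Your first two steps coincide with the paper's: restrict to the free locus supplied by \Cref{lem.free} (with birational invariance of essential dimension on each geometric fiber justifying the replacement), and form the fppf quotient $Y=X/G$, which by Artin's theorem is an algebraic space of finite type over $R$ with $X\to Y$ an \'etale $G$-torsor. The divergence, and the gap, is in how you pass from the algebraic space $Y$ to a scheme. You propose to shrink $Y$ to an open subscheme that is dense in both fibers, and you correctly isolate the unproved ingredient: that the scheme locus of $\tilde Y\setminus\overline Z$ is dense in the special fiber. This does not follow from the general results you invoke. The density of the scheme locus of a noetherian (quasi-separated) algebraic space is density in the total space, i.e.\ it contains the generic points; since $Y$ is $R$-flat, all generic points lie in the generic fiber, so this gives no information along the closed fiber, which is a nowhere dense closed subspace. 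Nor does arranging that both fibers of $\tilde Y\setminus\overline Z$ are schemes help: being a scheme is not a fiberwise condition, the scheme locus is an open subspace, and the closed fiber is closed rather than open, so a point of the closed fiber can fail to have any scheme neighborhood even though the fiber through it is a scheme (this is exactly the phenomenon exhibited by the standard examples of non-scheme algebraic spaces over a DVR). To repair the argument along your lines you would need additional input --- e.g.\ separatedness/decency of $Y$ together with the nontrivial fact that such a space is a scheme near its points of codimension $\leqslant 1$ --- none of which you establish; note also that separatedness of $Y=X/G$ is itself not automatic.

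The paper sidesteps this issue entirely. Instead of trying to make $Y$ itself (birationally and fiberwise) a scheme, it takes \emph{any} surjective \'etale morphism $Y'\to Y$ with $Y'$ a scheme --- such a cover exists by the very definition of an algebraic space, and can be chosen separated --- and replaces the torsor $X\to Y$ by its pullback $X\times_Y Y'\to Y'$. Since every geometric point of $Y$ factors through $Y'$, the geometric fibers of the torsor are unchanged, so the essential dimensions entering \Cref{ed-dvr} (and the subsequent reduction in \Cref{torsor-over-dvr}) are unaffected. This makes the reduction purely formal and avoids the scheme-locus question on which your argument currently depends.
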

   
    After replacing $X$ by the open $R$-fiberwise dense subscheme $U$ constructed in \Cref{lem.free}, we may assume that $G$ acts freely
    on $U$. Let $X/G\to \Spec R$ denote the fppf-quotient of $X\to \Spec R$ by the $G$-action. By a theorem of M. Artin~\cite[Th\'eor\`eme 3.1.1]{sivaramakrishna1973schemas}, $X/G$ is represented by an algebraic space of finite type $Y$ over $R$. (Equivalently, the quotient stack $[X/G]$ has trivial stabilizers, hence it is represented by an algebraic space.) Since $G$ is smooth, the projection $X\to X/G$ is 
    an \'etale torsor. 
    
    We claim that $X$ is flat over $R$. If $Y$ were a scheme, this would follow directly from~\cite[Tag 02JZ]{stacks-project}, since $X$ is flat over both $R$ (this is one of the assumptions of~\Cref{ed-dvr}) and $Y$ (because $X \to Y$ is a $G$-torsor). If general we use the fact that $R$-flatness is an \'etale local property: to prove that a representable morphism of algebraic spaces is flat, it suffices to do so locally after an \'etale base change. This way we reduce the claim to the case, where $Y$ is a scheme and~\cite[Tag 02JZ]{stacks-project} applies.
	
	Let $Y'\to Y$ be surjective \'etale morphism and $X':=X\times_YY'$. For every $y\in Y$ and every geometric point $\cl{y}$ of $Y$ lying above $y$, $\cl{y}$ factors through some $y'\in Y'$ and we have a $G_{k(y')}$-equivariant isomorphism $(X_y)_{k(y')}\simeq (X')_{y'}$. It follows that passage to an \'etale cover of $Y$ does not alter the essential dimension of the geometric fibers of $X\to Y$. Therefore, we may assume that $Y$ is a scheme, that is, $X\to Y$ is an \'etale $G$-torsor in the category of schemes. This proves \Cref{torsor-of-schemes}.
	
		\begin{claim}\label{torsor-over-dvr}
	For the purpose of proving \Cref{ed-dvr}, we may assume that:
	\begin{itemize}
	    \item $X$ is the total space of a $G$-torsor $X\to \Spec A$, where $A$ is a discrete valuation ring containing $R$ and the inclusion $R\subset A$ is a local homomorphism, and
	    \item $\ed_{l}(X_{l}) = \ed_{\cl{l}}(X_{\cl{l}})$.
	\end{itemize}
	\end{claim}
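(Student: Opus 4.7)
The plan is to localize the $G$-torsor $X\to Y$ from \Cref{torsor-of-schemes} along a one-dimensional arc inside $Y$ — namely the spectrum of a DVR $A$ dominating $R$ — chosen so that its generic point maps to a generic point of $Y$ and its closed point maps to a generic point of a component of $Y_k$, and then, if needed, to enlarge $\on{Frac}(A)$ to force the essential dimension of the generic fiber to match that of the geometric generic fiber.

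To locate the DVR, I would first note that since $X\to Y$ is faithfully flat and $X$ is $R$-flat, $Y$ is $R$-flat as well. Every irreducible component of the reduced scheme $Y$ is therefore not contained in $Y_k$ (the uniformizer of $R$ is a nonzerodivisor on $\mc{O}_Y$, hence lies in no minimal prime), so each component dominates $\Spec R$ and meets both $Y_l$ and $Y_k$. Pick a generic point $\eta$ of $Y$ (necessarily in $Y_l$) and let $Z=\cl{\set{\eta}}$ be its reduced closure in $Y$. Since $Z$ is an $R$-flat irreducible component of $Y$, the intersection $Z\cap Y_k$ is nonempty and its generic points have codimension $1$ in $Z$. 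Choose $s$ to be such a generic point; then $\mc{O}_{Z,s}$ is a $1$-dimensional noetherian local domain with fraction field $k(\eta)$, and $R\to \mc{O}_{Z,s}$ is local because $s\in Y_k$. By Krull--Akizuki, the integral closure of $\mc{O}_{Z,s}$ in $k(\eta)$ is a semilocal Dedekind domain; localizing at a maximal ideal produces a DVR $A_0\supset R$ with $\on{Frac}(A_0)=k(\eta)$ and $R\subset A_0$ a local extension. To secure the second bullet, use that essential dimension is monotone under field extensions and bounded above by $\ed_k(G)$: there is a finite extension $l^{*}/\on{Frac}(A_0)$ realizing the required equality. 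Apply Krull--Akizuki once more to the integral closure of $A_0$ in $l^{*}$ and localize at a maximal ideal to obtain a DVR $A\supset A_0$ with $\on{Frac}(A)=l^{*}$ and $R\subset A$ local. Finally, pull back the torsor along $\Spec A\to \Spec \mc{O}_{Z,s}\to Z\hookrightarrow Y$ to obtain a $G$-torsor $X\times_Y\Spec A\to\Spec A$, which we rename $X$.

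The main obstacle will be verifying that \Cref{ed-dvr} in this reduced form implies the original statement. The identification to exploit is the standard fact that, for a generically free primitive $G$-variety, the essential dimension equals that of the generic torsor class. Because $\eta$ was chosen as a generic point of $Y$ and $s$ as a \emph{generic} point of a component of $Y_k$, the geometric fibers of the new torsor $X\times_Y\Spec A\to\Spec A$ recover, up to finite field extensions that do not affect the essential dimension of geometric fibers, the generic torsor classes computing $\ed_{\cl{l}}(X_{\cl{l}})$ and $\ed_{\cl{k}}(X_{\cl{k}})$ in the original setup. The bookkeeping needed to reconcile these base changes — in particular, to ensure that passage to $l^{*}$ and to $A/\mathfrak{m}_A$ does not alter the essential dimension of the geometric fibers — is the main technical content of the reduction.
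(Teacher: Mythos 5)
Your proposal is essentially the paper's own proof: the paper produces the DVR by citing \cite[Tag 054F]{stacks-project}, which gives a map $\Spec A\to Y$ with $\on{Frac}(A)=k(\eta)$ hitting the generic point of $Y$ and the generic point of $Y_k$, and your Krull--Akizuki construction is just that lemma spelled out, while the second bullet is likewise achieved by descending to a finite subextension of $\cl{l}$ and enlarging the DVR. Two small caveats. First, the existence of $l^{*}$ does not follow from ``monotonicity plus the bound $\ed_k(G)$'' (that only shows $\ed_{l'}(X_{l'})$ stabilizes at some value $\geqslant \ed_{\cl{l}}(X_{\cl{l}})$); the correct reason, used in the paper, is that a compression of $X_{\cl{l}}$ realizing $\ed_{\cl{l}}(X_{\cl{l}})$ is already defined over a finite subextension of $l$. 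Second, the ``bookkeeping'' you defer is precisely the content of the claim, and it is not an obstacle: primitivity of the fibers forces $Y$, $Y_l$, $Y_k$ to be irreducible (so your $\eta$ and $s$ are \emph{the} generic points of $Y$ and $Y_k$), the residue field at the generic point of $A$ equals $k(y)$ on the nose and the residue field at its closed point is algebraic over $k(y')$ (your Krull--Akizuki finiteness, or the paper's transcendence-degree count), so the geometric fibers over the generic and closed points are unchanged by the same base-change argument as in \Cref{torsor-of-schemes}; one also needs, as in the paper, the observation $\ed_{k'}(X_{k'})\geqslant \ed_{\cl{k}}(X_{\cl{k}})$ to see that enlarging the fraction field (and hence the residue field) in the last step is harmless.
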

	By \Cref{torsor-of-schemes}, we may assume that $X$ is the total space of a $G$-torsor $X \to Y$, where $Y$ is a separated 
	$R$-scheme of finite type. Since $G$ acts primitively on $X_k$ and $X_l$, the varieties $Y_k$ and $Y_l$ are geometrically irreducible. 
	By \cite[Lemma 054F]{stacks-project}, there exists a morphism $\Spec A\to Y$, where $A$ is a discrete valuation ring whose generic point $\eta$ maps to the generic point $y$ of $Y$ so that the induced inclusion $k(y)\subset k(\eta)$ is an equality, and whose closed point $s$ maps to the generic point $y'\in Y_k$. We have
	\[\trdeg_kk(y')=\dim(Y_k)=\dim(Y_l)-1=\trdeg_kk(y)-1=\trdeg_kk(\eta)-1=\trdeg_kk(s),\]
	hence the field extension $k(s)/k(y')$ is algebraic.
	We thus obtain the following Cartesian diagram. 
	\[ \xymatrix{  X_A \ar@{->}[d]  \ar@{->}[r] &  X \ar@{->}[d]     \\          
			\Spec(A) \ar@{->}[r]     &            Y \ar@{->}[d] \\
			                         &     \Spec(R), } \]
	where $X_A = X \times_{\Spec(R)} \Spec(A)$. By construction, the morphism $\Spec A\to \Spec R$ sends the closed point of $\Spec A$ to the closed point of $\Spec R$, and so it is local. As in \Cref{torsor-of-schemes}, replacing the $G$-torsor $X \to Y$ by the $G$-torsor $X_A \to \Spec A$ 
	does not alter the essential dimension of the geometric fibers over the generic and the closed points of $R$. This proves the first assertion of \Cref{torsor-over-dvr}.
	
	Since every $G_{\cl{l}}$-equivariant compression of $X_{\cl{l}}$ over $\cl{l}$ is defined over some finite extension of $l$, there is a finite subextension $l\subset l'\subset \cl{l}$ such that
	$\ed_{l'}(X_{l'}) = \ed_{\cl{l}}(X_{\cl{l}})$.
	Let $R'\supset R$ be a discrete valuation ring with fraction field $l'$, and let $k'\supset k$ be the residue field of $R'$. The $G_R$-torsor $X$ over $R$ lifts to a  $G_{R'}$-torsor on $X_{R'}$, which is $R'$-fiberwise generically free and primitive. Since  $\ed_{k'}(X_{k'})\geqslant \ed_{\cl{k}}(X_{\cl{k}})$, we are allowed to replace $R$ by $R'$. This completes the proof of \Cref{torsor-over-dvr}.
	
		\smallskip
    We are now ready to complete the proof of \Cref{ed-dvr}. We may place ourselves in the setting of \Cref{torsor-over-dvr}. Since $k$ is algebraically closed and $G$ is in good characteristic, the assumptions of \cite[Theorem 6.4]{specialization1} are satisfied, hence $\ed_{l}(X_{l})\geqslant \ed_{k}(X_{k})$. Therefore  
    \[\ed_{\cl{l}}(X_{\cl{l}})=\ed_{l}(X_{l})\geqslant  \ed_{k}(X_{k})\geqslant \ed_{\cl{k}}(X_{\cl{k}}).\qedhere\]
\end{proof}

	\section{Proof of Theorem \ref{verygen}}
	\label{sect.rigidity}
	
	\begin{lemma}\label{mumford}
	Let $k/k_0$ be a field extension of infinite transcendence degree such that $k$ is algebraically closed. Let $B_0$ be an irreducible $k_0$-variety and $B:=B_0\times_{k_0}k$. Then the set of $k$-rational points of $B$ mapping to the generic point of $B_0$ is dense in $B$.
	\end{lemma}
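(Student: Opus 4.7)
The plan is to reduce to showing that every nonempty principal open of $B$ contains a $k$-point mapping to the generic point $\eta_0$ of $B_0$, push the chosen open down to a scheme defined over a finitely generated subextension $k_1 \subset k$, and then exploit the infinite transcendence degree of $k/k_1$ to construct an embedding into $k$ of the function field of an irreducible component of the descent.

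First I would reduce to the case where $B_0 = \Spec A_0$ is affine, so that $B = \Spec(A_0 \otimes_{k_0} k)$, and replace an arbitrary nonempty open $U\subset B$ by a nonempty principal open $D(f) \subset U$ with $0 \neq f \in A_0 \otimes_{k_0} k$. Since $f$ involves only finitely many elements of $k$, there is a finitely generated subextension $k_0 \subset k_1 \subset k$ such that $f$ lies in, and is nonzero in, $A_1 := A_0 \otimes_{k_0} k_1$. Setting $B_1 := \Spec A_1 = B_0 \times_{k_0} k_1$, faithful flatness of $B_1 \to B_0$ implies that every irreducible component of $B_1$ dominates $B_0$; I would choose an irreducible component $B_1^{(i)} \subset B_1$ meeting $D(f)$, and let $K_1$ be its function field, a finitely generated extension of $k_1$ containing $K_0 := k_0(B_0)$.

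Because $k_1/k_0$ is finitely generated and $k/k_0$ has infinite transcendence degree, $k/k_1$ also has infinite transcendence degree; combined with $k$ being algebraically closed and $K_1/k_1$ being finitely generated, this yields a $k_1$-algebra embedding $\iota \colon K_1 \hookrightarrow k$. The resulting morphism $\Spec k \to \Spec K_1 \hookrightarrow B_1$ has image the generic point of $B_1^{(i)}$, which sits inside $D(f)$ and maps to $\eta_0$ in $B_0$, and by the universal property of $B = B_1 \times_{k_1} k$ it lifts to a $k$-point of $B$ lying in $U$ and mapping to $\eta_0$, giving the required density. The main subtle point is the essential use of the \emph{infinite} transcendence degree hypothesis: the auxiliary field $k_1$ depends on $U$ through the coefficients of $f$, so one needs $k/k_1$ to still have sufficient transcendence degree for every such finitely generated $k_1$, which is exactly what the hypothesis guarantees.
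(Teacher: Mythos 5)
Your proposal is correct and follows essentially the same route as the paper: descend a small open (the paper descends $U\hookrightarrow B$ itself, you descend a principal open $D(f)$) to a finitely generated subextension $k_1/k_0$, embed the function field of the descended scheme into $k$ over $k_1$ using the infinite transcendence degree plus algebraic closedness of $k$, and lift to a $k$-point of $B$ via the fibre product. Your explicit choice of an irreducible component of $B_0\times_{k_0}k_1$ dominating $B_0$ (via flatness/going-down) is a slightly more careful handling of possible reducibility than the paper's reference to ``the generic point of $U_1$'', but the argument is the same in substance.
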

	
	\begin{proof}
	  Let $U$ be a non-empty open $k$-subscheme of $B$. It suffices to prove the following
	  
	  \smallskip
	  Claim: $U$ has a $k$-point which maps to the generic point of $B_0$.
	  
	  \smallskip
	  To prove this claim, note that the open embedding $U\hookrightarrow B$ is defined over some intermediate subfield $k_0 \subset k_1 \subset k$ such that the extension $k_1/k_0$ is finitely generated. In other words,  
	  $U\hookrightarrow B$ is obtained by base change from an affine open embedding $U_1\hookrightarrow B_0\times_{k_0}k_1$ defined over $k_1$. In particular, the morphism $U_1\to B_0$ is dominant. Let $\eta_1:\Spec K_1\to U_1$ be the generic point of $U_1$. 
	  
	  Now consider a subfield $k_1\subset L_1\subset K_1$ such that $L_1/k_1$ is purely transcendental of finite transcendence degree and $K_1/L_1$ is finite. Since $k/k_1$ has infinite transcendence degree, there exists a field embedding $\iota:L_1\hookrightarrow k$ compatible with the $k_1$-algebra structures of $L_1$ and $k$. Since $k$ is algebraically closed and  $K_1/L_1$ is finite, we may extend $\iota$ to a field embedding $K_1\hookrightarrow k$, again compatible with the $k_1$-algebra structures of $K_1$ and $k$. This gives rise to a scheme morphism \[u_1:\Spec k\to \Spec K_1\xrightarrow{\eta_1} U_1.\] Since $U=U_1\times_{k_1}k$, $u_1$ uniquely lifts to
	  a $k$-point $u$ of $U$ mapping to the generic point of $U_1$. Since the morphism $U_1\to B_0$ is dominant, the $k$-point $u$ maps to the generic point of $B_0$. This completes the proof of the Claim and thus of \Cref{mumford}.
	\end{proof}

	We will make use of the following ``rigidity property" of essential dimension. For a proof, see~\cite[Lemma 2.2]{specialization1}.
	
	\begin{lemma}\label{rigidity} Let $k$ be an algebraically closed field, $G$ be a $k$-group, and $X$ be a generically free primitive $G$-variety defined over $k$. Then	$\ed_k(X) = \ed_l(X_l)$ for any field extension $l/k$. \qed
	\end{lemma}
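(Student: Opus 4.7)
The plan is to handle the two inequalities separately. The easy direction $\ed_l(X_l)\leqslant \ed_k(X)$ is immediate from base change: a $G$-compression $X\dashrightarrow Y$ over $k$ realizing $\ed_k(X)$ pulls back to a $G_l$-compression $X_l\dashrightarrow Y_l$ over $l$ of the same dimension. For the reverse inequality, I will use a spreading-out and specialization argument. I start with a $G_l$-compression $f\colon X_l\dashrightarrow Y$ with $\dim(Y)=n:=\ed_l(X_l)$. Since $f$ and $Y$ involve only finitely many elements of $l$, they descend to a finitely generated subextension $k\subset l_0\subset l$; replacing $l$ by $l_0$, I may assume $l/k$ is finitely generated. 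I then write $l=k(B)$ for an irreducible $k$-variety $B$, and spread $(Y,f)$ out to a $G$-equivariant family $\mc{Y}\to V$ over a dense open $V\subset B$ (with $G$ acting trivially on $V$), together with a $G$-equivariant rational map $\tilde f\colon X\times_k V\dashrightarrow \mc{Y}$ of $V$-schemes whose restriction to $\Spec(l)$ recovers $f$.

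The next step is to shrink $V$ so that $\mc{Y}\to V$ is flat of relative dimension $n$ with geometrically reduced fibers, and such that for every geometric point $\cl{b}$ of $V$ the fiber $\mc{Y}_{\cl{b}}$ is a generically free primitive $G$-variety and $\tilde f_{\cl{b}}$ is dominant. Generic freeness transfers from the generic fiber by restricting $\tilde f$ to the complement of the $G$-saturation of the closed locus in $\mc{Y}$ where the inertia group scheme is non-trivial; this locus misses the generic fiber of $\mc{Y}\to V$. Dominance on each geometric fiber over a dense open of $V$ follows from generic dominance by standard constructibility. Since $k$ is algebraically closed and $V$ is a non-empty $k$-variety, $V(k)$ is non-empty; picking any $b\in V(k)$ yields a $G$-compression $\tilde f_b\colon X\dashrightarrow \mc{Y}_b$ over $k$ with $\dim(\mc{Y}_b)=n$, giving $\ed_k(X)\leqslant n=\ed_l(X_l)$.

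The part I expect to require the most care is the propagation of primitivity. The generic fiber $Y_{\cl{l}}$ has finitely many geometric irreducible components, which $G(\cl{l})$ permutes transitively. These components spread out to a finite \'etale cover of a dense open subset of $V$, and for $\cl{b}$ over a dense open the components of $\mc{Y}_{\cl{b}}$ correspond bijectively to the fiber of this \'etale cover, with the $G$-action inherited from the generic fiber; hence transitivity is preserved after shrinking $V$ once more. Everything else is routine bookkeeping about flatness, dimensions of fibers, and constructibility of the dominance and generic-freeness loci.
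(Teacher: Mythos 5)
Your proposal is correct and follows the same route as the proof the paper relies on: the lemma is not proved in this paper but cited from \cite[Lemma 2.2]{specialization1}, and the argument there is exactly your scheme --- the trivial base-change inequality, descent of a minimal compression to a finitely generated subextension $l_0=k(B)$, spreading out over a dense open $V\subset B$, and specialization at a $k$-point of $V$, which exists because $k$ is algebraically closed. Two of the steps you label routine need a small repair in a complete write-up: the non-free locus of the $G$-action on $\mc{Y}$ is not closed in general, so rather than deleting its saturation you should spread out a dense open subset of the generic fiber on which the stabilizer group scheme equals the identity section and shrink $V$ so that this persists (the locus where it fails has constructible image in $V$ missing the generic point); and in characteristic $p$ the generic fiber $Y$ is a priori only reduced, not geometrically reduced, which you can fix either by noting that $l(Y)$ embeds in $l(X_l)$, a separable extension of $l$ since $k$ is perfect, or by passing to the reduction of the chosen special fiber, which changes neither its dimension nor the existence of the dominant $G$-equivariant map from the reduced variety $X$. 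With these adjustments your treatment of primitivity --- the finite \'etale cover of fiberwise irreducible components, on which the action factors through the finite group $\pi_0(G)(k)$, so that transitivity on the geometric generic fiber forces transitivity on all geometric fibers over a dense open of $V$ --- goes through as sketched.
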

		
     Not let $f \colon \mc{X}\to B$ be as in \Cref{verygen}. For every integer $n$, we set 
	\[\Phi_f(n):=\set{b\in B \, | \, \ed_{k(\cl{b})}(\mc{X}_{\cl{b}})\leqslant n \text{ for some geometric point $\cl{b}$ with image $b$}}.\]
	
	\begin{lemma}\label{phi}
		(a) A point $b\in B$ belongs to $\Phi_f(n)$ if and only $\ed_{k(\cl{b})}(\mc{X}_{\cl{b}}) \leqslant n$ for every geometric point $\cl{b}$ with image $b$.
		
		\smallskip
		(b) Let $\pi:B'\to B$ be a morphism of schemes, and let $f':X\times_BB'\to B'$ be the base of change of $f$ along $\pi$. Then $\Phi_{f'}(n)=\pi^{-1}(\Phi_f(n))$.
	\end{lemma}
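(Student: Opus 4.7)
My plan is to reduce both parts to the rigidity property \Cref{rigidity}. The key observation I will use is that for any $b \in B$, once I fix an algebraic closure $\overline{k(b)}$ of $k(b)$, every geometric point $\bar{b}$ above $b$ has algebraically closed residue field $k(\bar{b})$ into which $\overline{k(b)}$ embeds over $k(b)$ (using the universal property of algebraic closure together with the algebraic closedness of $k(\bar{b})$); consequently $\mathcal{X}_{\bar{b}}$ arises as a base change of $\mathcal{X}_b \times_{k(b)} \overline{k(b)}$ along such an embedding.

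For part (a), applying the hypothesis of \Cref{verygen} to the geometric point $\Spec(\overline{k(b)}) \to B$ tells me that $\mathcal{X}_b \times_{k(b)} \overline{k(b)}$ is itself a generically free primitive $G_{\overline{k(b)}}$-variety, so \Cref{rigidity} is applicable. For any geometric point $\bar{b}$ above $b$ this yields
\[\ed_{k(\bar{b})}(\mathcal{X}_{\bar{b}}) \;=\; \ed_{\overline{k(b)}}\bigl(\mathcal{X}_b \times_{k(b)} \overline{k(b)}\bigr),\]
and since the right-hand side does not depend on the choice of $\bar{b}$, the ``for some'' and ``for every'' versions of the defining condition of $\Phi_f(n)$ coincide.

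For part (b), I would first note that $f'$ inherits all the hypotheses of \Cref{verygen} from $f$ under base change (flatness, separatedness and finite type are preserved, and every geometric fiber of $f'$ is also a geometric fiber of $f$), so part (a) is also available for $f'$. The inclusion $\Phi_{f'}(n) \subseteq \pi^{-1}(\Phi_f(n))$ is essentially formal: composing a geometric point $\bar{b}'$ of $B'$ above $b'$ with $\pi$ yields a geometric point $\bar{b}$ of $B$ above $\pi(b')$ with the same residue field, and the fibers $(\mathcal{X}\times_B B')_{\bar{b}'}$ and $\mathcal{X}_{\bar{b}}$ are canonically identified. For the reverse inclusion, I pick an arbitrary geometric point $\bar{b}'$ above $b'$ and set $\bar{b} := \pi \circ \bar{b}'$; the assumption $\pi(b') \in \Phi_f(n)$, combined with part (a) applied at $\pi(b')$, guarantees $\ed_{k(\bar{b})}(\mathcal{X}_{\bar{b}}) \leqslant n$, and so $b' \in \Phi_{f'}(n)$.

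The only real subtlety, and what I expect to be the conceptual heart of the argument, is that condition of being "for some" versus "for every" geometric point in the definition of $\Phi_f(n)$ is not obviously the same; this is precisely the content of part (a), and without the rigidity property the reverse inclusion in part (b) would fail because a geometric point chosen above $b$ need not factor through a given preimage $b'$.
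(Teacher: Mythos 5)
Your proposal is correct and takes essentially the same route as the paper: both parts come down to the rigidity property (Lemma~\ref{rigidity}) plus the canonical identification of the fiber over a geometric point of $B'$ with the fiber over its composition with $\pi$. The only cosmetic difference is in (a), where the paper compares two arbitrary geometric points by embedding their residue fields into a common overfield (an algebraic closure of a quotient of $k(\cl{b}_1)\otimes_{k(b)}k(\cl{b}_2)$), while you compare each geometric point to the fixed algebraic closure $\overline{k(b)}$, which embeds into every such residue field; both amount to the same single application of rigidity.
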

	
	\begin{proof}
		(a) Let $\cl{b}_1$ and $\cl{b}_2$ be two geometric points of $B$ with image $b$. By \cite[Exercise 3.1.10(b)]{liu2002algebraic}, the ring $A:=k(\cl{b}_1)\otimes_{k(b)}k(\cl{b}_2)$ is not zero. If $\mathfrak{m}$ is a maximal ideal of $A$, the quotient $A/\mathfrak{m}$ is a field containing $k(\cl{b}_1)$ and $k(\cl{b}_2)$. By considering an algebraic closure of $A/\mathfrak{m}$, we are thus reduced to the case when there is a field homomorphism $k(\cl{b}_1)\hookrightarrow k(\cl{b}_2)$. We may thus assume that $k(\cl{b}_1)\subset k(\cl{b}_2)$. In this case, (a) follows from~\Cref{rigidity}.
		
		(b) Let $b'\in B'$ and $b\in B$ be such that $\pi(b')=b$. Let $\cl{b}'$ be a geometric point of $B$ with image $b'$, so that $\cl{b}:=\pi\circ \cl{b}'$ is geometric point of $B$ with image $b$. Then there is a natural isomorphism $\mc{X}_{\cl{b}'}\simeq \mc{X}_{\cl{b}}\times_{k(\cl{b})}k(\cl{b}')$ of $G_{\cl{b}'}$-varieties, and 
		\[\ed_{k(\cl{b})}(\mc{X}_{\cl{b}})=\ed_{k(\cl{b}')}(\mc{X}_{\cl{b}}\times_{k(\cl{b})}k(\cl{b}'))= \ed_{k(\cl{b}')}(\mc{X}_{\cl{b}'})\] 
		by \Cref{rigidity}(c). In particular, $b\in \Phi_f(n)$ if and only if $b'\in \Phi_{f'}(n)$, as desired.
	\end{proof}

	\begin{proof}[Proof of \Cref{verygen}]
		We must show that $\Phi_f(n)\subset B$ is a union of countably many closed subsets of $B$. By noetherian approximation (see 
	\cite[IV, \S 8.10]{ega4} or	\cite[Appendix C]{thomason1990higher}), the $G$-action on $\mc{X}$ descends to a subfield $k_0$ of $k$ which is finitely generated over its prime field. In other words, there exist 
		
		\begin{itemize}
		\smallskip \item
		a field $k_0\subset k$ finitely generated over its prime field, 
		
		\smallskip \item
		a smooth group scheme $G_0$ of finite type over $k_0$, 
		
		\smallskip \item
		$k_0$-schemes of finite type $B_0$ and $\mc{X}_0$, 
		
		\smallskip \item
		a $G_0$-action on $\mc{X}_0$ over $k_0$, 
		
		\smallskip \item
		a flat separated $G_0$-invariant morphism $f_0 \colon \mc{X}_0\to B_0$, and 
		
		\smallskip \item
		a Cartesian diagram 
		\begin{equation}\label{spread}
			\begin{tikzcd}
				\mc{X} \arrow[d,"f"] \arrow[r] & \mc{X}_0 \arrow[d,"f_0"]  \\
				B \arrow[r,"\pi"] & B_0,  
			\end{tikzcd}
		\end{equation}\noindent
		such that $G=G_0\times_{k_0}k$, and the base change of the $G_0$-action on $\mc{X}_0/B_0$ along $\pi$ is isomorphic to the $G$-action on $\mc{X}/B$.  \end{itemize}
		
		By \Cref{phi}(b), we have $\Phi_f(n)=\pi^{-1}(\Phi_{f_0}(n))$. Thus, since $\pi$ is continuous, it suffices to prove that $\Phi_{f_0}(n)$ is a countable union of closed subsets of $B_0$. In other words, we may assume that $k$ is finitely generated over its prime field and that $B$ is of finite type over $k$. In this case, $B$ is countable, hence $\Phi_{f}(n)$ is countable. It remains to show that $\Phi_{f}(n)$ is a union of closed subsets of $B$.  By elementary topology it suffices to show that $\Phi_f(n)$ is closed under specialization; see
		\cite[Tag 0EES]{stacks-project}. In other words, if $b' \in B$ is a specialization of $b \in \Phi_f(n)$, i.e., $b'\in\cl{\set{b}}$, then
		we want to show that $b' \in \Phi_f(n)$.
		
		By \cite[Proposition 7.1.4]{ega2}, there exist a discrete valuation ring $R$ with closed point $s$ and generic point $\eta$, and a morphism $\Spec R\to B$ sending $s$ to $b'$ and $\eta$ to $b$. Pre-composing with the completion map $\Spec \hat{R}\to \Spec R$, we may assume that $R$ is complete. Since $B$ is a $k$-scheme, the residue fields of $b,b',s,\eta$ all have the same characteristic as $k$. 
		Thus $R$ is complete and equicharacteristic and hence, by Cohen's Structure Theorem we have an isomorphism $R\simeq k(s)[[t]]$. 
		In particular, the residue field $k(s)$ is contained in $R$. By \Cref{ed-dvr}(b), letting $\cl{\eta}$ and $\cl{s}$ be geometric 
		points of $\Spec R$ lying above $\eta$ and $s$, respectively, we deduce that 
		\[\ed_{k(\cl{\eta})} \big( X_{k(\cl{\eta})} \big) \geqslant 
		\ed_{k(\cl{s})} \big( X_{k(\cl{s})} \big).\] 
		Now \Cref{phi}(a) tells us that
		\[ n \geqslant \ed_{{k(\cl{b})}} \big( X_{k(\cl{b})} \big) \geqslant \ed_{k(\cl{b}')} \big( X_{k(\cl{b}')} \big),\]
		where $\cl{b}$ and $\cl{b}'$ are geometric points of $B$ lying above $b$ and $b'$, respectively. This shows that $\Phi_f(n)$ is closed under specialization. 
		
		Assume now that $k$ is algebraically closed and of infinite transcendence degree over its prime field, and let $m$ be the maximum of $\ed_{k(\cl{b})}(\mc{X}_{\cl{b}};G_{k(\cl{b})})$, where $\cl{b}$ ranges over all geometric points of $B$. Consider the diagram (\ref{spread}). Since $\Phi_{f_0}(m-1)$ is a union of closed subsets of $B_0$ and it does not equal $B_0$, it does not contain the generic point of $B_0$. By \Cref{phi}(b), we have $\Phi_f(m-1)=\pi^{-1}(\Phi_{f_0}(m-1))$, hence for every $k$-point $b$ of $B$ mapping to the generic point of $B_0$ we have  $\ed_k(\mc{X}_b)=m$. By \Cref{mumford}, the set of such $k$-points is Zariski dense in $B$.
	\end{proof} 
	
\begin{rmk}\label{verygen-coh-inv}    To put \Cref{verygen} in perspective, 
     we will conclude this section by recalling an analogous 
	 result for cohomological invariants from~\cite[Appendix]{colliot2002exposant}.  
	 For an overview of the theory of cohomological invariants, see~\cite{garibaldi2003cohomological}.
		
	Let $k$ be a field, $G$ be a linear algebraic $k$-group, and $f \colon \mc{X} \to B$ be a morphism as 
	in~\Cref{verygen}. Let $i$ be a non-negative integer, $C$ be a finite $\on{Gal}(k_s/k)$-module of order prime to the characteristic of $k$, and $F\in \on{Inv}^i(G,C)$ be a cohomological invariant over $k$ with values in the Galois cohomology ring $H^i(-,C)$. 
	Passing to a dense open subscheme of $B$ if necessary, we may assume that $F(k(X))$ comes from a cohomology class $\alpha\in H^i_{\textrm{\'et}}(X,C)$. In this case, by the compatibility of the specialization map in \'etale and Galois cohomology 
	\cite[Page 15, Footnote]{garibaldi2003cohomological}, this implies that $\alpha_{\cl{s}}=F(k(X_{\cl{s}}))$ (up to sign) 
	for every geometric point $\cl{s}$ of $B$. From~\cite[Proposition A7]{colliot2002exposant}, we deduce the following:		
\[	B_0:=\set{s\in B: F \big( k(X_{\cl{s}}) \big)=0 \text{ for some geometric point $\cl{s}$ above $s$}}  \]
is a countable union of closed subsets of $B$. 
Note that by the Rigidity Property for \'etale cohomology \cite[Corollary VI.2.6]{milne1980etale}, one may replace ``some" by ``every" 
in the definition of $B_0$, as in \Cref{phi}(a).
\end{rmk}

	\section{Counterexamples}
	\label{sect.counterexamples}
	
	\begin{example} \label{ex.general}
	The following example shows that in \Cref{verygen} we may not replace ``countable union'' by ``finite union''. In this example $l$ will denote an odd prime. We will assume that the base field $k = \mathbb C$ is the field of complex numbers and will write $\ed$ in place of $\ed_{\mathbb C}$.
	
		 Let $A$ be a complex abelian variety. Any choice of $v_1,\dots, v_r\in A[\ell]$ gives rise to a $(\Z/\ell \Z)^r$-action on $A$ via $(n_1, \ldots, n_r) \colon a \mapsto a + n_1 v_1 + \ldots + n_r v_r$. This action is free if and only if $v_1, v_2, \ldots, v_r$ are linearly independent over $\Z / \ell \Z$. When we view $A$ as a $(\Z/\ell \Z)^r$-variety via this action, we will denote it by $(A;v_1,\dots,v_r)$.
		 We will focus on the case, where $r = 2$ and $A = E \times E$ is the direct product of two copies of a complex elliptic curve $E$.
		 More specifically we will investigate how $\ed(E \times E; v_1, v_2)$ depends on the choice of $E$, $v_1$ and $v_2$.

         	Recall that the endomorphism ring of an elliptic curve over $\C$ is either $\Z$ or an order in an imaginary quadratic field extension of $\Q$, and that all such rings arise as endomorphism rings of a complex elliptic curve. By the Chinese Remainder Theorem, there exist  
         	infinitely many negative integers $d\equiv 2,3 \pmod 4$ such that $d$ is not a square modulo $\ell$. 
		
        \smallskip
		(i) Let $E$ be an elliptic curve over $\C$ such that $\on{End}(E)\simeq \Z[\sqrt{d}]$; see \cite[p. 426]{silverman2009arithmetic}. 
		 We claim that $\ed(E\times E; (q,0),(0,q))=1$ for any $q\in E(\C)[\ell]\setminus\set{0}$. 
		 
		 It is obvious from the definition
		that $\ed(E\times E: (q,0),(0,q)) \geqslant 1$, so we only need to show that $\ed(E\times E; (q,0),(0,q)) \leqslant 1$.
		Let $\phi\in \on{End}(E)$ be such that $\phi^2 \colon E \to E$ is multiplication by $d$. Since $\phi$ is an endomorphism, it restricts to a group homomorphism $E(\C)[\ell]\to E(\C)[\ell]$. Fixing a $(\Z/\ell \Z)$-basis of $E(\C)[\ell]\simeq (\Z/\ell \Z)^2$, $\phi$ corresponds to a matrix $A\in \on{GL}_2(\Z/\ell \Z)$. The matrix $A$ does not have any eigenvalues in $\Z/\ell \Z$. Indeed, if $Av=\lambda v$ for some non-zero $v\in (\Z/\ell \Z)^2$ and $\lambda\in \Z/\ell \Z$, then $dv=A^2v=\lambda^2v$, hence $d=\lambda^2$ in $\Z/\ell \Z$, which is impossible as $d$ is not a square modulo $\ell$. It follows that $q$ and $\phi(q)$ are linearly independent, and so form a basis of $E(\C)[\ell]$.
	     Now $\phi \colon (E;q)\to (E;\phi(q))$ is a $\Z/\ell \Z$-equivariant morphism and the composition
		\[(E\times E; (q,0),(0,q))\xrightarrow{(\on{id},\phi)} (E\times E; (q,0),(0,\phi(q)))\xrightarrow{+} (E;(q,\phi(q))) \] 
		is a $(\Z/\ell \Z)^2$-compression.  Thus $\ed(E\times E; (q,0),(0,q)) \leqslant 1$, as desired.
		
	    \smallskip
		(ii) Let $E$ be an elliptic curve such that $\on{End}(E)=\Z$, and let $q\in E(\C)[\ell]\setminus\set{0}$. We claim that $\ed(E\times E; (q,0),(0,q))=2$. 
		
		Indeed, assume the contrary. Then there exists a dominant $(\Z/ \ell \Z)^2$-equivariant rational map 
		\[ f \colon E\times E \dashrightarrow C, \]
		where $E \times E$ stands for the $(\Z/ \ell \Z)^2$-variety $(E \times E; (q, 0), (0, q))$ and $C$ is some curve on which $(\Z/\ell \Z)^2$ acts faithfully. We may assume that $C$ is smooth and projective. Since $\ell$ is odd, $(\Z/ \ell \Z)^2$ cannot act faithfully on $\P^1$. Thus $C$ is not isomorphic to $\P^1$. For all but finitely many $v$, $f$ restricts to a well-defined surjective morphism $E\simeq E\times\set{v}\to C$. We deduce from Hurwitz's formula that $C$ has genus $1$. After suitably choosing an origin for $C$, $f$ becomes an everywhere defined homomorphism of abelian varieties. The restrictions of $f$ to $E\times \set{0}$ and $\set{0}\times E$ give isogenies $f_1,f_2 \colon E\to C$ such that
		th element $(1, 0)$ of $(\Z/ \ell \Z)^2$ acts on $C$ via translation by $f_1(q)$, and the element $(0, 1)$
		acts on $C$ via translations by $f_2(q)$. Since the $(\Z / \ell \Z)^2$-action on $C$ is faithful, we conclude that
		$f_1(q)$ and $f_2(q)$ form a basis of $C[\ell]$. On the other hand, recall from~\cite[Lemma 4.2(b)]{silverman2009arithmetic}
		that $\on{Hom}(E, C)$ is torsion-free $\Z$-module. Since
		\[ \on{Hom}(E,C)\otimes_{\Z}\Q\simeq \on{Hom}(E,E)\otimes_{\Z}\Q\simeq \Q , \]
		we conclude that $\on{Hom}(E,C)= \Z$. This implies that there exists homomorphism $h \colon E\to C$ such that $f_1$ and $f_2$ are multiples of $h$. In particular, $f_1(q)$ and $f_2(q)$ are linearly dependent, a contradiction. We conclude that $C$ does not exist, and 
		thus $\ed(E\times E; (q,0),(0,q))=2$, as claimed.
	
	    \smallskip	
		For every prime $\ell$, there exists a complex curve $B$ and a family of elliptic curves $\mc{E}\to B$, together with a nowhere zero $\ell$-torsion section, such that every pair $(E;q)$ where $E$ is a complex elliptic curve and $q\in E(\C)[\ell]\setminus \set{0}$ arises as a fiber of $\mc{E}\to B$; 
		see~\cite[Proposition A4]{colliot2002exposant}. The group $\Z/\ell \Z$ acts freely on $\mc{E}$ over $B$ by translations by the $\ell$-torsion section, and so $(\Z/\ell \Z)^2$ acts freely on the self-product $\Phi:\mc{E}\times_B\mc{E}\to B$ by translation. The fibers of $\Phi$ are triples $(E\times E; (q,0), (0,q))$, where $q\in E(\C)[\ell]\setminus\set{0}$. There are infinitely many fibers as in (i), and the fibers as in (ii) form a very general subset. Thus, the above discussion shows that the fiber of $\Phi$ over a very general point of the curve $B$ has essential dimension $2$, while countably many fibers have essential dimension $1$. 
	\end{example}
	
	\begin{example}\label{flatness-necessary} The following example shows that the flatness assumption in \Cref{verygen} is necessary.
		
		Let $n$ be a positive integer, and let $k$ be an algebraically closed field of characteristic not dividing $n$. Consider the affine plane $\A^2_k=\Spec k[x,y]$ with coordinates $x,y$, let $X\subset \A^2$ be defined by the equation $x(y^n-1)=0$, let $B=\A^1_k=\Spec k[x]$, and let $f$ be the projection induced by the inclusion $k[x]\subset k[x,y]$. The group $\mu_n=\Z/n\Z$ acts on $\A^2_k$ by $\zeta\cdot (x,y)\mapsto (x, \zeta y)$. Then $X$ is $\mu_n$-invariant, $f$ is $\mu_n$-equivariant, and the $\mu_n$-action on the fibers of $f$ is generically free and primitive. We have $\ed_k(X_a)=0$ for every $a\in k^{\times}$, but $\ed_k(X_0)=\ed_k(\mu_n)=1$.
	\end{example}
	
	\begin{example} \label{alg.closure-necessary}
		The following example shows that \Cref{verygen} fails if $k$ is not algebraically closed. More precisely, in this case the $k$-points
		$s \in B(k)$, where $\ed_k(X_s) \leqslant n$ do not necessarily lie on a countable union of closed subvarieties of $B$.
		
		Indeed, let $k = \mathbb R$ be the field of real numbers and $G$ be the orthogonal group $\Orth_2$ defined over $\mathbb R$.
		Consider the action of $G = \Orth_2$ on $X = \GL_2$ via multiplication on the right. Note that $X$ is the total space of a
		$G$-torsor $\tau \colon X \to Y$, where $Y = \GL_2/O_2$ is naturally identified with the space of symmetric $2 \times 2$ matrices via
		$\tau \colon A \mapsto A A^T$.
		
		Now consider the morphism 
		\[ f \colon X = \GL_2 \longrightarrow B = \mathbb A^1 \setminus \{ 0 \}, \] 
		sending a matrix $A$ to $\det(A)^2$. This morphism factors through $\tau$ as follows: 
		\[ f \colon X \stackrel{\tau}{\longrightarrow} Y \stackrel{\det}{\longrightarrow} B = \mathbb A^1 \setminus \{ 0 \}. \]
		Denote that fibers of $X$ and $Y$ over $s \in B$ by $X_s$ and $Y_s$, respectively. Then $X_s$ is a $G$-torsor over $Y_s$. 
		
		\smallskip
		{\bf Claim:} View a non-zero real number $s$ as an $\bbR$-point of $B$. Then 
			\begin{equation} \label{e.real} \ed(X_s) = \begin{cases} \text{$0$, if $s < 0$, and} \\ 
				\text{$1$, if $s > 0$.}
			\end{cases} \end{equation}
		
	    Note that $Y_s$ is the variety of symmetric matrices $B =\begin{pmatrix} a & b \\ b & c \end{pmatrix}$ such that $\det(B) = s$. 
	    Thus $Y_s$ is a rational surface over $\bbR$ whose function field can be identified with $\bbR(a, b)$. Passing to the generic point of
	    $Y_s$, we see that $\ed_{\mathbb R}(X_s) = \ed_{\mathbb R}(\tau_s)$, where $\tau_s \in H^1(\bbR(a, b), \Orth_2)$ 
	    is the $\Orth_2$-torsor over $\bbR(a, b)$ obtained by pulling back $\tau$ to the generic point of $Y_s$. Examining the long exact cohomology sequence associated to the exact sequence $1 \to \Orth_2 \to \GL_2$ of algebraic groups and remembering that $H^1(\mathbb R(a, b), \GL_2) = 1$
	    by Hilbert's Theorem 90, we see that 
	    $H^1(\bbR(a, b), \Orth_2)$ is in a natural bijective correspondence with the set of 2-dimensional non-singular quadratic forms 
	    over $\bbR(a, b)$, up to equivalence, and the quadratic form $q_s$ corresponding to $\tau_s$ is the form whose Gram matrix is 
	    $\begin{pmatrix} a & b \\ b & c \end{pmatrix}$, where $c = \dfrac{s + b^2}{a}$. Note that, by definition, $\ed_{\bbR}(\tau_s) = \ed_{\bbR}(q_s)$ and the discriminant of $q_s$ is $s$.
	    
	    Since $q$ assumes the value $a$ and has discriminant $s$, $q_s$ is isomorphic to $\ang{ a, \, as}$, 
	    Here $\ang{a, as}$ denote the $2$-dimensional quadratic form $q_s(z, w) = a z^2 + as w^2$ over
	    $\mathbb R(a, b)$. If $s < 0$, then $q$ is isotropic over $\bbR(a, b)$. Hence, $q$ is hyperbolic over $\bbR(a, b)$, i.e., $q_s$ is
		isomorphic to $\ang{ 1, -1}$; see~\cite[Theorem I.3.2]{lam2005introduction}. In particular, $q_s$ descends to $\mathbb R$ and hence, 
		$\ed_{\mathbb R}(q_s) = 0$.
		
		On the other hand, suppose that $s > 0$. Then $s$ is a complete square in $\mathbb R(a, b)$, so 
		$q \simeq \ang{ a, \, a}$. Clearly $q_s$ descends to $\mathbb R(a) \subset K$, so 
		$\ed_{\mathbb R}(q_s) \leqslant 1$. In order to complete the proof of~\eqref{e.real},
		it remains to show that $\ed_{\mathbb R}(q_s) \neq 0$. We argue 
		by contradiction. Assume $\ed_{\mathbb R}(q_s) = 0$, i.e., $q_s$ descends to some intermediate extension $\mathbb R \subset K \subset \bbR(a, b)$,
		where $\trdeg_{\mathbb R}(K) = 0$. In other words, $K$ is algebraic over $\mathbb R$. Since $\mathbb R$ is algebraically 
		closed in $\bbR(a, b)$, this is only possible if $K = \mathbb R$, i.e., $q$ descends to a $2$-dimensional form $q_0$ defined over $\mathbb R$.
		Since $s > 0$, $q$ is anisotropic over $\bbR(a, b)$, and hence, so is $q_0$.
		Let $\nu_a \colon K^{\times} \to \mathbb Z$ the valuation associated to the variable $a$. It is now
		easy to see that for any $(0, 0) \neq (f, g) \in (K^{\times})^2$,	$\nu_a(q_0(f, g))$ is even, 
		where as $\nu_a(q(f, g))$ is odd. This tells us that $q$ and $q_0$ have no values in common, contradicting our assumption that
		$q$ descends to $q_0$. Thus completes the proof of~\eqref{e.real}. \qed
	\end{example}

\section{Transversal intersections in projective space}

This section contains several preliminary results which will be used in the proof of \Cref{complete-int}. The common theme
is transversal intersections of projective varieties with linear subspaces in projective space. Note that there are no algebraic 
groups or group actions here; they will come into play in the next section.

	Recall that a commutative ring with identity is said to be regular if it is noetherian and all its localizations at prime ideals are regular local rings.
	
	\begin{lemma}\label{simultaneous-regular}
		Let $A$ be a regular semi-local noetherian ring and $\mathfrak{m}_1,\mathfrak{m}_2,\dots,\mathfrak{m}_r$ be the maximal ideals of $A$. For each $1\leqslant i\leqslant r$, let $P_i\subset \mathfrak{m}_i$ be a prime ideal such that $P_i\not\subset \mathfrak{m}_j$ for any $j\neq i$ and such that each local ring $A/P_i$ is regular. Assume that the prime ideals $P_1, \ldots, P_r$ have the same height,
		$\on{ht}(P_1) = \dots = \on{ht}(P_r) = c$. Then there exist $h_1, h_2,\dots, h_c \in A$ such that $P_iA_{\mathfrak{m}_i}=(h_1,\dots,h_c)A_{\mathfrak{m}_i}$ and  $h_1,\dots, h_c$ 
		form a regular sequence in $A_{m_i}$ for each $i$.
	\end{lemma}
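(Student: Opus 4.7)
The plan is to produce $h_1, \ldots, h_c$ inside the ideal $I := P_1 \cap \cdots \cap P_r$ whose images generate $P_iA_{\mathfrak{m}_i}$ simultaneously for every $i$, and then invoke the Cohen--Macaulay property of regular local rings to upgrade this to a regular sequence. The key mechanism for decoupling the conditions at the different maximal ideals is the Chinese Remainder Theorem modulo the Jacobson radical $J := \mathfrak{m}_1 \cap \cdots \cap \mathfrak{m}_r$, combined with the fact that $I$ and $P_i$ coincide after localization at $\mathfrak{m}_i$.

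First I would verify that $IA_{\mathfrak{m}_i} = P_iA_{\mathfrak{m}_i}$ for every $i$. Relabeling the hypothesis gives $P_j \not\subset \mathfrak{m}_i$ whenever $j \neq i$, so each such $P_j$ contains an element that becomes a unit in $A_{\mathfrak{m}_i}$; hence $P_jA_{\mathfrak{m}_i} = A_{\mathfrak{m}_i}$. Since localization commutes with finite intersections, $IA_{\mathfrak{m}_i} = \bigcap_j P_jA_{\mathfrak{m}_i} = P_iA_{\mathfrak{m}_i}$. Because $A_{\mathfrak{m}_i}$ is regular and $A_{\mathfrak{m}_i}/P_iA_{\mathfrak{m}_i}$ is regular of codimension $c$, lifting a regular system of parameters of the quotient and completing it to one of $A_{\mathfrak{m}_i}$ shows that $P_iA_{\mathfrak{m}_i}$ is minimally generated by exactly $c$ elements; equivalently, $V_i := P_iA_{\mathfrak{m}_i}/\mathfrak{m}_iP_iA_{\mathfrak{m}_i}$ is a $c$-dimensional vector space over $k(\mathfrak{m}_i)$.

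Now view $I/JI$ as a module over $A/J$. Localizing at $\mathfrak{m}_i$ and observing that $JA_{\mathfrak{m}_i} = \mathfrak{m}_iA_{\mathfrak{m}_i}$ (the other $\mathfrak{m}_j$ extend to the unit ideal in $A_{\mathfrak{m}_i}$) yields $(I/JI)_{\mathfrak{m}_i} \cong V_i$. Since $A/J \cong \prod_i k(\mathfrak{m}_i)$ by the Chinese Remainder Theorem, the module $I/JI$ splits canonically as $\bigoplus_i V_i$. I would then pick an arbitrary $k(\mathfrak{m}_i)$-basis of each $V_i$ and lift the resulting element of $\bigoplus_i V_i$ along the surjection $I \twoheadrightarrow I/JI$ to obtain $h_1, \ldots, h_c \in I$. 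Nakayama's lemma in each $A_{\mathfrak{m}_i}$ then gives $(h_1, \ldots, h_c)A_{\mathfrak{m}_i} = P_iA_{\mathfrak{m}_i}$; since $A_{\mathfrak{m}_i}$ is Cohen--Macaulay and this ideal has height $c$ and is generated by $c$ elements, the $h_k$ automatically form a regular sequence. I do not expect a genuine obstacle: once the localization identity $IA_{\mathfrak{m}_i} = P_iA_{\mathfrak{m}_i}$ is verified and $A/J$ is split by the Chinese Remainder Theorem, the rest is bookkeeping, and the regular sequence conclusion is automatic from Cohen--Macaulayness.
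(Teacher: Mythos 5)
Your proof is correct, and it takes a somewhat different route from the paper's. The paper decouples the maximal ideals by applying the Chinese Remainder Theorem to $A/(P_1^2\cdots P_r^2)\cong \prod_i A/P_i^2$: using regularity of $A/P_i$ it picks generators $h_{i,1},\dots,h_{i,c}\in P_i$ whose images in $\mathfrak{m}_i/\mathfrak{m}_i^2$ are independent and which are $\equiv 1 \pmod{P_s^2}$ for $s\neq i$, then sets $h_j:=\prod_i h_{i,j}$; the squares are needed precisely because the regular-sequence property is then checked via linear independence in the cotangent space $\mathfrak{m}_i/\mathfrak{m}_i^2$ (Stacks, Tag 00SC). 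You instead work with $I=\bigcap_i P_i$ and the semisimple quotient $I/JI$ over $A/J\cong\prod_i k(\mathfrak{m}_i)$, identify its $i$-th component with $V_i=P_iA_{\mathfrak{m}_i}/\mathfrak{m}_iP_iA_{\mathfrak{m}_i}$, lift a simultaneous basis, and conclude by Nakayama plus the Cohen--Macaulay fact that $c$ elements generating a height-$c$ ideal of a CM local ring form a regular sequence. Both arguments use the regularity of $A/P_i$, but for different purposes: the paper uses it to produce cotangent-independent local generators, while you only need it (via the standard theorem that a prime with regular quotient in a regular local ring is generated by part of a regular system of parameters) to know that $V_i$ has dimension exactly $c$. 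Your version is more module-theoretic and avoids the explicit congruence-and-product construction; the paper's is more hands-on and directly exhibits the generators as part of a regular system of parameters at each $z_i$, which is the geometric transversality used later. The only places where your write-up is compressed rather than wrong are the claim $\dim_{k(\mathfrak{m}_i)}V_i=c$ (standard, as noted) and the implicit compatibility of the map $I\to I/JI\to (I/JI)_{\mathfrak{m}_i}\cong V_i$ with the natural map $I\to V_i$, which holds since localization is exact and $JA_{\mathfrak{m}_i}=\mathfrak{m}_iA_{\mathfrak{m}_i}$.
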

	
	\begin{proof}
		For any $a \neq b$, the ideal $P_{a}+P_{b}$ is not contained in any $\mathfrak{m}_i$. Hence $P_{a}+P_{b}=A$. By the Chinese Remainder Theorem the natural ring homomorphism $A/(P_1^2\cdots P_r^2)\to (A/P_1^2)\times\dots\times (A/P_r^2)$ is an isomorphism; see~\cite[Tag 00DT]{stacks-project} or~\cite[Exercise 2.6]{eisenbud}.
		In particular, the natural projection $A\to (A/P_1^2)\times\dots\times (A/P_r^2)$ is surjective. Since $A/P_i$ is regular
		for every $1\leqslant i\leqslant r$, there exist $h_{i,1}, \dots, h_{i,c}\in P_i$ whose images $\cl{f}_{i,1},\dots,\cl{f}_{i,c}$ in the $A/\mathfrak{m}_i$-vector space $\mathfrak{m}_i/\mathfrak{m}_i^2$ form a basis of the subspace $P_i/\mathfrak{m}_i^2$, and such that $h_{i,j}-1\in P_s^2$ for $s \neq i$. In particular $h_{i,1},\dots,h_{i,c}\in P_i\setminus (\cup_{s \neq i} P_s)$. By Nakayama's Lemma $P_iA_{\mathfrak{m}_i}=(h_{i,1},\dots, h_{i,c})A_{\mathfrak{m}_i}$.

		Set $h_j:=\prod_{i=1}^n h_{i,j}$ for each $j = 1, \ldots, r$. Then $h_j-h_{i,j}\in P_i^2$ for all $i$, hence $h_j\in \cap_{i=1}^r P_i$ for all $j$. 
		Moreover, the images of $h_1,\ldots,h_c$ in $\mathfrak{m}_i/\mathfrak{m}_i^2$ form a basis of $P_i/\mathfrak{m}_i^2$ for all $i$. By \cite[Tag 00SC]{stacks-project},  ${h_1},\ldots, {h_c}$ form a regular sequence in $\mathfrak{m}_iA_{\mathfrak{m}_i}$ for each $i$. Since $\on{ht}(P_i)=c$, the $h_1,\ldots ,h_c$ generate $P_iA_{\mathfrak{m}_i}$ for each $i = 1, \ldots, r$.
	\end{proof}

	Let $k$ be a field. For every $0\leqslant d\leqslant n$, we denote by $\on{Gr}(n,n-d)$ the Grassmannian of codimension $d$ hyperplanes of $\P^n_k$. If $W \subset \P^n_k$ is a $k$-subspace of codimension $d$, we will
	denote by $[W]\in \on{Gr}(n,n-d)(k)$ the $k$-point representing $W$ in the Grassmannian. We will say that $[W]$ intersects $Z$ transversely at a smooth point $z \in Z$ if $z \in W$ and the tangent space $T_z(Z)$ intersects $W$ transversely. 
	Equivalently, $[W]$ intersects $Z$ transversely at a smooth point $z \in Z$ 
	if $W$ can be cut out by linear forms $h_1, \ldots, h_d \in \Gamma(\P^n_k,\mc{O}(1))$ such that
	$h_1(z) = \ldots, h_d(z) = 0$ and $\displaystyle h_1/h, \ldots, h_d/h$ form a regular sequence in the local ring $\mc{O}_{Z, z}$
	for some (and thus any) $h \in \Gamma(\P^n_k,\mc{O}(1))$ with $h(z) \neq 0$; see~\cite[Section 10.3]{eisenbud}.
	
	\begin{lemma}\label{grass-flat}
		Let $Z$ be a closed subscheme of $\P^n_k$, 
		$0 \leq c \leq n$ be an integer, 
		\[ I_{Z, c} \subset \on{Gr}(n, n-c)\times Z \] 
		be the incidence correspondence parametrizing pairs $([W],v)$ such that $v \in W$, and		
		\begin{align*} \phi \colon & I_{Z, c} \; \to \; \on{Gr}(n,n-c) \\
                                & ([V], v) \to V
        \end{align*} 
        be the projection to the first component. Assume that $z$ is a smooth $k$-point of $Z$ and a codimension $c$
        linear subspace $W_0$ of $\P^n$ intersects $Z$ transversely at $z$. Then $\phi$ is smooth at $([W_0], z)$.
	\end{lemma}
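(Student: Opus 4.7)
The plan is to verify smoothness of $\phi$ at $([W_0], z)$ in three steps: first establish regularity of $I_{Z,c}$ at this point via the other projection to $Z$; second, verify that the fiber $\phi^{-1}([W_0])$ is regular of the expected dimension at $z$ using transversality; and third, invoke miracle flatness to conclude that $\phi$ is flat at $([W_0], z)$, and hence smooth since its fiber there is smooth.

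For the first step, I would use the projection $\psi \colon I_{Z,c} \to Z$, $([W], v) \mapsto v$. The universal codimension-$c$ linear subspace $\mc W \subset \Gr(n, n-c) \times \P^n_k$ projects to $\P^n_k$ as a Grassmannian bundle whose fiber over $v$ parametrizes codimension-$c$ projective subspaces through $v$; this bundle is Zariski-locally trivial, hence smooth, of relative dimension $\dim \Gr(n, n-c) - c$. Base-changing along $Z \hookrightarrow \P^n_k$ identifies $\psi$ with the restriction of $\mc W \to \P^n_k$ to $Z$, so $\psi$ is smooth of the same relative dimension. Since $Z$ is smooth at $z$, this implies $I_{Z,c}$ is smooth (hence regular) at $([W_0], z)$, of local dimension $\dim_z Z + \dim \Gr(n, n-c) - c$.

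For the second step, $\phi^{-1}([W_0])$ is naturally identified with $W_0 \cap Z$. By the tangent-space formulation of transversality, $T_z W_0 + T_z Z = T_z \P^n_k$, so the $c$ linear forms $h_1, \dots, h_c$ cutting out $W_0$ restrict to $c$ linearly independent linear functionals on $T_z Z$. Their images in $\mathfrak m_{Z, z}/\mathfrak m_{Z, z}^2$ are therefore linearly independent, and hence extend to a regular system of parameters in the regular local ring $\mc O_{Z, z}$. Consequently $W_0 \cap Z$ is smooth at $z$ of dimension $\dim_z Z - c$.

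For the third step, $\mc O_{I_{Z,c}, ([W_0], z)}$ is Cohen--Macaulay (being regular by the first step) and $\mc O_{\Gr(n, n-c), [W_0]}$ is regular; moreover the dimension identity
\[ \dim \mc O_{I_{Z,c}, ([W_0], z)} = \dim \mc O_{\Gr(n, n-c), [W_0]} + \dim \mc O_{\phi^{-1}([W_0]), ([W_0], z)} \]
is immediate from the first two steps. Miracle flatness \cite[Theorem 23.1]{matsumura1989commutative} then yields flatness of $\phi$ at $([W_0], z)$, and combined with smoothness of the fiber at that point this gives the desired smoothness of $\phi$. I expect the only potential friction in the argument to be the clean identification of $\psi$ as a Zariski-locally trivial Grassmannian bundle; once this is established, the remainder is dimension counting and standard commutative algebra.
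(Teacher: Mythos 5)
Your argument is correct, but it takes a genuinely different route to flatness than the paper does. You establish that $I_{Z,c}$ is regular at $([W_0],z)$ by observing that the second projection $\psi\colon I_{Z,c}\to Z$ is the pullback to $Z$ of the incidence bundle $\mc W\to\P^n_k$, a Zariski-locally trivial Grassmannian bundle of relative dimension $\dim\Gr(n,n-c)-c$, then check via transversality that the fiber $W_0\cap Z$ is smooth at $z$ of the expected dimension, and finally invoke miracle flatness (regular base, Cohen--Macaulay source, matching local dimensions) to get flatness of $\phi$ at the point. The paper instead proves flatness by induction on $c$: it chooses linear forms $h_1,\dots,h_c$ cutting out $W_0$ whose dehomogenizations form a regular sequence in $\mc O_{Z,z}$, slices by the hyperplane $\{h_1=0\}$, identifies the restricted family with $I_{Z',c-1}\to\Gr(n-1,n-c)$ for $Z'=Z\cap\P^{n-1}$, and applies the slicing criterion for flatness \cite[Corollary 6.9]{eisenbud}; the base case $c=0$ is trivial. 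Both proofs conclude identically, using \cite[Tag 01V8]{stacks-project} (flat at a point with smooth fiber implies smooth there). Your route buys a cleaner, induction-free argument and extra information (local regularity and dimension of $I_{Z,c}$), at the price of the standard but not entirely free facts that the incidence bundle over $\P^n_k$ is smooth of the stated relative dimension and that the local dimension formula for the smooth morphism $\psi$ gives the dimension identity you call ``immediate''; the paper's slicing induction needs no dimension count and no bundle structure, using only the regular-sequence formulation of transversality. One small point worth making explicit in your write-up: smoothness of $W_0\cap Z$ at $z$ follows from regularity of its local ring because $z$ is a $k$-rational point, and the scheme-theoretic fiber $\phi^{-1}([W_0])$ is indeed $W_0\cap Z$ since $I_{Z,c}=\mc W\times_{\P^n_k}Z$.
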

	
	
	\begin{proof} We claim that $\phi$ is flat at $([W_0], z)$. Note that 
	the fiber $W \cap Z$ of $\phi$ over $[W]$ is smooth at $z$, so the lemma follows from 
	this claim by~\cite[Tag 01V8]{stacks-project}.
	
	To prove the claim, we argue by induction on $c$. In the base case, $c = 0$,
	$\on{Gr}(n, n -c)$ is a point, $\phi$ is the identity map, and the claim is obvious.
	
	For the induction step, assume that $c \geqslant 1$ and the claim holds when $c$ is
	replaced by $c -1$, for every $n \geqslant c$ and every closed subscheme 
	$Z$ of $\P^n$. Choose linear forms $h_1, \ldots, h_c \in \Gamma(\P^n_k,\mc{O}(1))$ such that
	$h_1, \ldots, h_c$ cut out $W_0$, and $\displaystyle h_1/h, \ldots, h_c/h$ 
	form a regular sequence in the local ring $\mc{O}_{Z, z}$ for some  
	$h \in \Gamma(\P^n_k,\mc{O}(1))$ such that $h(z) \neq 0$.
	
	Denote the zero locus of $h_1$ by $\P^{n-1}$, the intersection $Z \cap \mathbb P^{n-1}$ by $Z'$, 
	the preimage of $Z'$ under $\phi$ by $I_{Z, c}'$, and the restriction of $\phi$ to $I_{Z, c}$ by $\phi'$.
	By~\cite[Corollary 6.9]{eisenbud} it suffices to show that $\phi' \colon I_{Z, c}' \to \Gr(n, n-c)'$ 
	is flat at $([W_0], z)$. Here $\Gr(n, n-c)'$ denotes the hypersurface in $\Gr(n, n-c)$ consisting of $(n-c)$-dimensional linear subspaces of
	$\mathbb P^n$ which are contained in $\mathbb P^{n-1}$.	We will view $Z'$ as a closed subscheme 
	of $\mathbb P^{n-1}$. Since $h_1$ cuts $Z$ transversely at $z$, $z$ is a smooth point of $Z'$. 
		Now observe that
	$\Gr(n, n-c)'$ is naturally isomorphic to $\Gr(n - 1, n - c)$ and $I_{Z, c}'$ is naturally isomorphic to $I_{Z', c-1}$ over $Z'$
	so that the following diagram commutes
		\[
		\begin{tikzcd}
	I_{Z', c-1} \arrow[d] \arrow[r, "\simeq"]	&	I'_{Z, c} \arrow[hook,r]  \arrow[d, "\phi'"] & I_{Z, c} \arrow[d,"\phi "] \\ 
	\Gr(n-1, n - c) \arrow[r, "\simeq"] &		\Gr(n, n-c)' \arrow[hook, r] & \Gr(n, n - c)
		\end{tikzcd}
		\]	
	Since $\Gr(n-1, n-c) = \Gr(n-1, \, (n-1) - (c-1))$, we can use the induction assumption to conclude that
	$\phi'$ is flat at $([W_0], z)$, as desired. This completes the induction step.
\end{proof}

	\begin{lemma}\label{grass2}
		Let $Z$ be an irreducible quasi-projective $k$-variety,  $Y\subset Z$ be a closed equidimensional subvariety, with irreducible components $Y_1,\dots,Y_r$. For every $1\leqslant i\leqslant r$, let $z_i\in Y$ be a closed point such that $Y$ and $Z$ are smooth at $z_i$, and $c$ be the codimension of $Y_i$ in $Z$. Then
		there exist an integer $n \geqslant 0$, a closed embedding $Z \hookrightarrow \P^n_k$ and a codimension $c$ subspace $W_0 \subset \P^n_k$ such that
		$W_0$ intersects $Z$ transversely at $z_i$, and locally around $z_i$ we have $Y= Y_i = Z\cap W_0$ (scheme-theoretically) for each $i = 1, \ldots r$.
%
%
%
	\end{lemma}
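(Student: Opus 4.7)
The plan is to apply \Cref{simultaneous-regular} to the semi-local ring $A=\mc{O}_{Z,\set{z_1,\dots,z_r}}$ in order to produce $c$ regular functions that cut out $Y$ locally at each $z_i$ as a transverse regular sequence, and then to realize these functions as restrictions of $c$ linear forms in a suitably ample projective embedding.

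Since $Y$ is smooth at $z_i$, the point $z_i$ lies on a unique irreducible component of $Y$; renumbering, assume $z_i\in Y_i$, so that $z_j\notin Y_i$ for $j\neq i$ and $Y=Y_i$ locally around $z_i$. Let $\mathfrak{m}_i\subset A$ be the maximal ideal corresponding to $z_i$ and $P_i\subset A$ the prime ideal defining $Y_i$. Smoothness of $Z$ at each $z_i$ makes $A$ regular, smoothness of $Y$ at $z_i$ makes $A/P_i$ regular at $\mathfrak{m}_i$, equidimensionality gives $\on{ht}(P_i)=c$, and $z_j\notin Y_i$ gives $P_i\not\subset \mathfrak{m}_j$ for $j\neq i$. \Cref{simultaneous-regular} then produces $h_1,\dots,h_c\in A$ that generate $P_iA_{\mathfrak{m}_i}$ and form a regular sequence in each $A_{\mathfrak{m}_i}$.

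Next I would realize the $h_j$ as ratios of global sections of a very ample line bundle. Pass to a projective variety containing $Z$ as a dense open subscheme, and fix a very ample line bundle $L$ on it; since every condition to be verified is local at the $z_i\in Z$, this reduction is harmless. Each $h_j$ is a rational function regular at every $z_i$, so the union $D$ of its pole divisors is a closed subset disjoint from $\set{z_1,\dots,z_r}$. By Serre's theorem, for any fixed $N$ and all $d\gg 0$ the sheaf $\mc{I}_D^N\otimes L^d$ is globally generated; choosing $N$ larger than the orders of the poles of the $h_j$ along $D$ and then $d$ large, one finds $\tilde{h}\in H^0(Z,\mc{I}_D^N\otimes L^d)\subset H^0(Z,L^d)$ with $\tilde{h}(z_i)\neq 0$ for every $i$ (such a section exists because $\mc{I}_D^N\otimes L^d$ is locally free of rank one at each $z_i\notin D$ and the ground field is infinite, so a general global section avoids the finite set $\set{z_1,\dots,z_r}$). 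The products $\tilde{h}_j:=h_j\tilde{h}$ are then regular along $D$, hence global sections of $L^d$, with $\tilde{h}_j/\tilde{h}=h_j$ in a neighborhood of each $z_i$.

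Finally, after further enlarging $d$ so that $L^d$ is very ample, complete $\set{\tilde{h},\tilde{h}_1,\dots,\tilde{h}_c}$ to a generating system $s_0,\dots,s_n$ of $H^0(Z,L^d)$, where $s_0=\tilde{h}$ and $s_j=\tilde{h}_j$ for $1\leqslant j\leqslant c$. The corresponding closed embedding $Z\hookrightarrow \P^n_k$ sends each $z_i$ into the affine chart $\set{x_0\neq 0}$, and we take $W_0\subset\P^n_k$ to be the codimension $c$ linear subspace $\set{x_1=\cdots=x_c=0}$. On the chart $\set{x_0\neq 0}$, the intersection $W_0\cap Z$ is cut out near $z_i$ by $\tilde{h}_1/\tilde{h},\dots,\tilde{h}_c/\tilde{h}=h_1,\dots,h_c$, which generate $P_iA_{\mathfrak{m}_i}$ and form a regular sequence there; hence $W_0$ meets $Z$ transversely at $z_i$ and $Z\cap W_0=Y_i=Y$ scheme-theoretically in a neighborhood of $z_i$. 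The hard step is the linearization: one must exhibit a single common denominator $\tilde{h}$ that is simultaneously nonzero at every $z_i$ and vanishes to sufficiently high order along $D$ to absorb the poles of every $h_j$, which forces a joint choice of $N$ and $d$ combining Serre generation with the avoidance of finitely many prescribed points.
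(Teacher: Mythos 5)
Your argument is correct, and its essential first half is exactly the paper's: both proofs semi-localize $Z$ at $z_1,\dots,z_r$ and invoke \Cref{simultaneous-regular} to produce $h_1,\dots,h_c$ that generate the ideal of $Y_i$ in $\mc{O}_{Z,z_i}$ and form a regular sequence there. Where you genuinely diverge is the linearization step. The paper stays inside a fixed (locally closed) embedding $Z\hookrightarrow \P^m_k$: it writes each $h_j$ as a ratio of homogeneous forms with denominator nonvanishing at every $z_i$, multiplies numerators by forms nonvanishing at the $z_i$ to equalize degrees, and composes with a Veronese embedding so that the cutting equations become linear; this is elementary, needs no compactification, and avoids any hypothesis on $k$. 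You instead pass to a projective closure $\cl{Z}$, absorb all denominators into one auxiliary section $\tilde{h}$ of a high power $L^d$ of an ample bundle via Serre-type global generation, and re-embed by $L^d$; this buys a coordinate-free, sheaf-theoretic formulation but requires two small repairs: since $\cl{Z}$ need not be normal, ``order of the poles along $D$'' is not defined, and you should work with the coherent denominator ideals $\mc{J}_j=(\mc{O}_{\cl{Z}}:h_j)$, using that $\mc{I}_D^N\subset \mc{J}_j$ for $N\gg 0$; and your appeal to $k$ being infinite when choosing $\tilde{h}$ nonvanishing at all $z_i$ is not licensed by the statement of the lemma, though it is easily removed by taking $d$ large enough that $H^0(\cl{Z},\mc{J}\otimes L^d)$ surjects onto the fibers of $L^d$ at $z_1,\dots,z_r$. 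Finally, like the paper's own proof (which also starts from a locally closed embedding), your construction yields a closed embedding of $\cl{Z}$ and hence only a locally closed embedding of $Z$ unless $Z$ is projective; this discrepancy with the wording of the statement is already present in the paper and is harmless in the application, where $Z=\P(V^r)/G$ is projective.
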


	\begin{proof}
		Since $Z$ is quasi-projective, there exists an affine open subset $\Spec B\subset Z$ containing $z_1,\dots,z_r$. Let $\Spec A\subset Z$ be the semi-localization of $Z$ at $\set{z_1,\dots,z_r}$. By definition, the ring $A$ is obtained from $B$ by localizing at the multiplicative subset consisting of elements which do not belong to the ideals corresponding to the $z_i$. Then $A$ is a regular semi-local ring. By \Cref{simultaneous-regular}, there exist $f_1,\dots,f_c\in A$, such that $f_1,\dots, f_c$ form a regular sequence in $\mc{O}_{Z,z_i}$ and generates the ideal of $Y_i$ in $\mc{O}_{Z,z_i}$, for each $1\leqslant i\leqslant r$. 
		
		 Since $Z$ is a quasi-projective, there exists a locally closed embedding $\iota: Z\hookrightarrow \P^n_k$. For every $0\leqslant i\leqslant c$, $f_i$ is the restriction of a rational function $P_i/Q_i$ on $\P^n_k$, where $P_i$ and $Q_i$ are homogeneous polynomials and $Q_i(z_h)\neq 0$. (Here, by $Q_i(z_h)$ we mean the image of $Q_i$ in the residue field of $k(z_h)$, which is a finite extension of $k$.)  We deduce that around every $z_h$ the variety $Y$ is the scheme-theoretic intersection of $Z$ and the closed subscheme defined by $P_1,\dots,P_c$. Multiplying the $P_i$ by some homogeneous polynomials not vanishing at the $z_h$, we may assume that the $P_i$ have the same degree $D\geqslant 1$. Replacing $m$ by $mD$ (which amounts to composing $\iota$ with a suitable Veronese embedding of $\P^n_k$), we may assume that the $P_i$ have degree $1$, that is, that $\set{P_i=0}\subset\P^n$ are hyperplanes. Now the embedding $\iota \colon Z \hookrightarrow \P^n_k$ and the linear subspace $W_0$ of $\P^n_k$ given by $\set{P_1=\dots=P_c=0}$ have the properties claimed 
		 in the lemma. 
%
%
	\end{proof}
	
	\section{Proof of Theorem~\ref{complete-int}}
	\label{sect.family}

    Let $X_0 = X_0^{(1)} \cup X_0^{(2)} \cup \ldots $ be the irreducible decomposition of $X_0$.
	Choose rational functions $\alpha_1, \ldots, \alpha_{d-1} \colon X_0 \dashrightarrow \A^1_k$ such that the restriction of
	$\alpha_1, \ldots, \alpha_{d-1}$ to $X_0^{(i)}$ generate the function field $k(X_0^{(i)})$ for every $i$.
	After adjoining all $G$-translates of $\alpha_1, \ldots, \alpha_{d-1}$ to this set, we may assume that $G$ permutes $\alpha_1, \ldots, \alpha_{d-1}$. 
	Consider the $G$-equivariant rational map
	$\alpha \colon X_0 \dasharrow \mathbb P(V)$ taking $x \in X_0$ to $(1 : \alpha_1(x): \ldots : \alpha_d(x))$ for a suitable linear (permutation)
	representation of $G$ on $V = k^{d + 1}$. Note that since the $G$-action on $X_0$ is assumed to be faithful,
	the $G$-action on $\P(V)$ is faithful as well. Moreover,
	by our construction, $\alpha$ induces a $G$-equivariant
	birational isomorphism between $X_0$ and $\alpha(X_0)$. After replacing $X_0$ with the closure of $\alpha(X_0)$ in $\mathbb P(V)$, we may assume that $X_0$ is a closed subvariety of $\P(V)$.
	
Let $\P(V)_{\rm non-free}$ be  the non-free locus for the $G$-action on $V$, i.e., the union of the fixed point loci $\P(V)^g$ as $g$ ranges over the non-trivial elements of $G$. Since $G$ acts faithfully on $\P(V)$, we have
\begin{equation} \label{e.non-free0} \dim \, \P(V) > \dim \, \P(V)_{\rm non-free}.
\end{equation}
Let $V^r$ be the direct sum of $r$ copies of $V$ (as a $G$-representation). We claim that
\begin{equation} \label{e.non-free}
\text{the codimension of $\P(V^r)_{\rm non-free}$ in $\P(V^r)$ is $\geqslant r$.}
\end{equation}
Indeed, for each $1 \neq g \in G$, let $\on{mult}(g, V)_{\lambda}$ be the dimension of the $\lambda$-eigenspace of $g$ and
	$\on{mult}(g, V)$ be the maximal value of $\on{mult}(g, V)_{\lambda}$, where $\lambda$ ranges over $\overline{k}$.
	Then $\dim \, \P(V)_{\rm non-free}$ is the maximal value of $\on{mult}(g, V) - 1$ as $g$ ranges over the non-trivial 
	elements of $G$. Clearly $\on{mult}(g, V^r) = r \on{mult}(g, V)$ for every $g$. Thus 
	\[ \dim \, \P(V^r)_{\rm non-free} = (\dim \, \P(V)_{\rm non-free} + 1) r - 1,  \]
	and the codimension of $\P(V^r)_{\rm non-free}$ in $\P(V^r)$ is 
	\begin{align*} \dim \, \P(V^r) - \dim \P(V^r)_{\rm non-free} & = r (\dim \, \P(V) + 1) - 1 - \big( (\dim \, \P(V)_{non-free} + 1) r - 1
	\big) \\	& = 
	r (\dim \, \P(V) - \dim \, \P(V)_{\rm non-free}) \geqslant r , \end{align*} 
where the last inequality follows from~\eqref{e.non-free0}.
This completes the proof of~\eqref{e.non-free}.

The $G$-equivariant linear embedding $V\hookrightarrow  V^r$, given by $v\mapsto (v,0,\dots,0)$, induces a $G$-equivariant closed embedding
\[X_0 \subset \P(V)\hookrightarrow \P(V^r). \]  This allows us to view $X_0$ as a $G$-invariant subvariety of $\P(V^r)$. 
By~\cite[Theorem 4.14]{popov1994} there exists a geometric quotient map $\pi \colon \P(V^r) \to \P(V^r)/G$. Explicitly, 
write \[\P(V^r)=\on{Proj}(k[t_0,\dots,t_{dr}]),\] where each variable $t_i$ has degree $1$, and let $A:=k[t_0,\dots,t_{dr}]^G$.
Then  $\P(V^r)/G\simeq \on{Proj}(A)$ and $\pi$ is induced by the inclusion $A^G \hookrightarrow A$ of graded rings.
Restricting $\pi$ to $X_0 \subset \P(V^r)$, we obtain the geometric quotient map $X_0 \to X_0/G$. Note that
$\dim \P(V^r)/G  = \dim \P(V^r) = dr - 1$ and $\dim \, X_0/G = \dim \, X_0 = e$, so every irreducible component of $X_0/G$ 
is of codimension $c = dr - 1 - e$ in $\P(V^r)/G$.
				
		We can choose smooth closed points $x_1,\dots,x_r$,	one on each irreducible component of $X_0$, such that 
		 the (scheme-theoretic) stabilizer $G_{x_i}$ is trivial for every $i$. 
		We now apply \Cref{grass2} to $Z=\P(V^r)/G$, $Y= X_0/G$, $z_1, \ldots, z_r$, 
		where $z_i = \pi(x_i)$ for each $i$. We deduce that there exist a closed 
		embedding $\P(V^r)/G \hookrightarrow \P^n$ defined over $k$ and a subspace $W_0 \subset \P^n$ 
		of codimension $c$ such that $X_0/G = W_0 \cap (\P(V^r)/G)$ locally around $z_i$ for each $i$. 
		Consider the diagram
		\[
		\begin{tikzcd}
			T \arrow[dr] \arrow[hook,rr] \arrow[bend right = 50,swap, "\cl{f}"]{ddrr} &  &  \Gr(n, n-c) \times \P(V^r) \arrow[d,"\pi\times \on{id}"]  \\
			& I_{Z, c} \arrow[hook,r]  \arrow[dr, swap, "\phi"] & \Gr(n, n-c) \times Z \arrow[d,"\on{pr}_2"] \\ 
		& 	& \Gr(n, n - c)
		\end{tikzcd}
		\]
		\noindent
		Here  $I_{Z, c}$ is the incidence correspondence parametrizing pairs $([W], q)$, where $W$ is a linear subspace of $\P^n_k$ of codimension $c$ and $q\in W$, as in~\Cref{grass-flat}, and $T$ is the preimage of $I_{Z, e}$ in $\P(V^r) \times \Gr(n, n-c)$. 
		
		We claim that the map $\phi$ is smooth at $([W_0], z_i)$ for each $i = 1, \ldots, r$. To prove this claim, note that flatness is local with respect to the fpqc topology; see~\cite[02L2]{stacks-project}. Since the morphism $\Spec \cl{k}\to \Spec k$ is fpqc, we may base-change 
		from $k$ to $\cl{k}$. By~\Cref{grass-flat}, $\phi_{\cl{k}}$ is smooth; hence, so is $\phi$. This proves the claim. (Alternatively, one could replace the fpqc topology by the \'etale topology as follows. The points $x_i\in X_0$ may be chosen so that the finite extensions $k(x_i)/k$ are separable. Since flatness is \'etale-local, to check flatness of $\phi$ at $([W_0], z_i)$ we may first base change to the Galois closure of $k(z_i)$, and then apply \Cref{grass-flat}. This yields an alternative proof of the claim.) By our choice of $x_1, \ldots, x_r$, $\pi$ is smooth 
		at each of these points; hence,  $\cl{f} = \phi \circ (\pi \times \text{id}) \colon T \to \Gr(n, n-c)$ is smooth  at $([W], x_i)$ for each $i$.
				
		We will construct the family $f \colon \mc{X} \to B$ by restricting $\cl{f}$ to a dense open 
		subset $\mc{X} = T \setminus C$, where $C = T_{\rm sing} \cup T_{\rm non-free}$. Here  
		$T_{\rm sing}$ is the singular locus of $f$ and $T_{\rm non-free} = \P(V^r)_{\rm non-free} \times \Gr(n, n-c)$ 
		is the non-free locus for the $G$-action in $T$. (Recall that $\P(V^r)_{\rm non-free}$ was defined
		 at the beginning of this section.)
	    The base $B$ of our family will be obtained by removing from $\on{Gr}(n,n-c)$ the locus of points $b \in \on{Gr}(n,n-c)$ such that
	    the entire fiber $T_b$ lies in $C$. In particular, $\cl{f}(\mc{X})\subset B$. Since $C$ is closed in $T$, $B$ is open in $\Gr(n, n-c)$.
	    Note also that since $\cl{f}$ is a proper morphism, $\cl{f}(C)$ is closed in $B$. 
	    
	    Let $b_0 := [W_0] \in \Gr(n, n-c)(k)$. By our choice of $x_1, \ldots, x_r$, 
	    none of the points $([W_0], x_i)$  lie in $C$. Hence $b_0 \in B(k)$ and the union of the irreducible components of $\mc{X}_{b_0}$
	    passing through $x_1, \ldots, x_r$ remains birationally isomorphic to $X_0$. Moreover, by our construction
	    $\mc{X}$ and $B$ are irreducible, the $G$-action on $\mc{X}$ is free, $f$ is smooth  of constant relative dimension $e = \dim(X_0) = \dim(\mc{X}_{b_0})$. In particular, conditions (i) and (ii) of Theorem~\ref{complete-int} hold for $f$.
	    It remains to check that for $r > e$, 
	    
	    \smallskip
	    (a) $\cl{f}(T_{\rm non-free}) \neq \Gr(n, n-c)$,
	    
	    \smallskip
	    (b) $\cl{f}(T_{\rm sing}) \neq \Gr(n, n-c)$, 
	    
	    \smallskip
	    (c) there exists a dense open subset $U \subset \Gr(n, n-c)$ such that the fibers of $f$ over $U$
	    are projective and irreducible. 
	    
	    \smallskip
	    Together, (a), (b) and (c) this will ensure that condition (iii) of Theorem~\ref{complete-int} holds as well.
        
        To prove (a), recall that by~\eqref{e.non-free}, the codimension of $P(V^r)_{\rm non-free}$ in $P(V^r)$ is $\geqslant r > e$, i.e., $\dim \, \pi (\P(V^r)_{\rm non-free}) < rd - 1 - e = c$. Consequently, an $(n-c)$-dimensional linear subspace $W$ in $\P^n$ in general position will intersect $\pi(\P(V^r)_{\rm non-free}) = \P(V)_{\rm non-free}/G$ trivially. Consequently, $\cl{f}^{-1}(W) \cap T_{\rm non-free} = \emptyset$. This proves (a).
        
        	To prove (b), recall that the fiber of the morphism $\phi \colon I_{Z, c} \to \Gr(n, n-c)$ over $[W]$ is $W \cap Z$. By Bertini's Theorem, there exists
		a dense open subset $U \subset  \Gr(n, n - c)$ consisting of $(n - c)$-dimensional linear subspaces $W$ of $\P^n$ 
		such that $W \cap Z$ is smooth and irreducible. By generic flatness, after replacing $U$ by a smaller open subset, 
		we may assume that $\phi \colon \phi^{-1}(U) \to U$ is flat. Appealing to~\cite[Tag 01V8]{stacks-project}, as we did in~\Cref{grass-flat},
		we see that since $\phi \colon \phi^{-1}(U) \to U$ is a flat map with smooth fibers,
		it is smooth. Finally, after intersecting $U$ with the complement of $\cl{f}(T_{\rm non-free})$ (which we 
		know is a dense open subset of $\Gr(n, n-c)$ by (a)), we may assume that $\pi \times \on{id}$ is smooth over
		$\phi^{-1}(U)$. Hence, the map $\cl{f} \colon \cl{f}^{-1}(U) \to U$, being a composition of two smooth maps, $\pi \times \on{id} \colon \cl{f}^{-1}(U) \to \phi^{-1}(U)$ and $\phi \colon \phi^{-1}(U) \to U$, is smooth. This proves (b).
		
         Finally, to prove (c), recall that	by definition, for $b = [W] \in B$, the fiber $\phi^{-1}(b) = W \cap Z =
         W \cap \P(V^r)/G$ is a complete intersection in $\P(V^r)/G$. The fiber $\mc{X}_{[W']} = f^{-1}(W')$ is cut out by the same homogeneous polynomials, now viewed as elements of $k[V^r]$ instead of $k[V^r]^G$. Thus $\mc{X}_{b} $ is a smooth complete intersection in $\P(V^r)$. Since $\dim \, \mc{X}_b = e \geqslant 1$, by~\cite[n.~78]{serre1955faisceaux}, $\mc{X}_b$ is connected, hence irreducible. This completes the proof of (c) and thus of Theorem~\ref{complete-int}. 
	\qed
	
\begin{rmk} The family $f \colon \mc{X} \to B$ constructed in the proof of \Cref{complete-int} has the additional property that
the fibers of $f$ over the dense open subset $U \subset B$ (the open subset in part (iii) of the statement of the theorem) 
are complete intersections in projective space $\P^n$. With a bit of extra effort one can make sure that every 
fiber of $f$ over $U$ is of general type. Since we will not need this assertion in this paper, we leave the 
proof as an exercise for the interested reader. 
\end{rmk}
	
	\section{Proof of Theorem~\ref{thm.free}}
	\label{sect.thm.free}
	
	Let $d:=\dim(X)$ and $e:=\ed_k(X)$. 
	We must show that there exists an irreducible smooth projective variety $Z$ and a $G$-torsor $Y \to Z$ such that $\dim(Y) = \dim(X)$ and $\ed_k(Y) = \ed_k(X)$.
	If $e = 0$, Theorem~\ref{thm.free} is obvious: we can take $Y = G \times \mathbb P^d$, where $G$ acts by translations on the first factor and trivially on the second.
	Thus we may assume without loss of generality that $e \geqslant 1$ and in particular, $d \geqslant 1$. In this case, we use the following strategy: construct a family $\mc{X} \to B$ as in Theorem~\ref{complete-int} with $X_0 = X$, 
	then take $Y = \mc{X}_b$ where $b$ is a $k$-point of $B$ in very general position. 
	\Cref{complete-int} tells us that $\dim(Y) = d$ and the $G$-action on $Y$ is strongly unramified.
	 We would like to appeal to~\Cref{verygen} to conclude that $\ed_k(Y) = e$. One difficulty in implementing this strategy is that
	
\smallskip
	(i) \Cref{complete-int} requires $G$ to be a finite group, whereas in~\Cref{thm.free}, $G$ is an arbitrary linear algebraic group over a field of good characteristic.
	
\smallskip
\noindent
     Even if assume that $G$ is a finite group, there is another problem:

\smallskip     
     (ii) \Cref{verygen} requires the fibers of $f \colon \mc{X} \to B$ to be primitive $G$-varieties, whereas in \Cref{complete-int} the fiber $\mc{X}_{b_0}$ over $b_0$ may not be primitive; we only know that it contains the primitive variety $X_0 = X$ 
     as a union of its irreducible components.

\smallskip
\noindent
We will overcome (i) by using reduction of structure to a finite group, and (ii) by using the following variant of \Cref{verygen}.

	\begin{thm}\label{verygen-sing}
		Let $G$ be a linear algebraic group over an algebraically closed field
		$k$ of good characteristic (see Definition~\ref{assume}), $f:\mc{X}\to B$ be a $G$-equivariant morphism of $k$-varieties such that $B$ is irreducible, $G$ acts trivially on $B$, and that the generic fiber of $f$ is a primitive and generically free $G_{k(B)}$-variety. Let $b_0\in B(k)$, $x_0\in \mc{X}_{b_0}(k)$, and $X_0$ be a $G$-invariant reduced open subscheme of $\mc{X}_{b_0}$ containing $x_0$ such that:
		\begin{enumerate}
			\item $X_0$ is a generically free primitive $G$-variety, and
			\item $f$ is flat at $x_0$.
		\end{enumerate} 
		Then, for a very general $b\in B(k)$, $\mc{X}_b$ is generically free and primitive, and $\ed_k(\mc{X}_b)\geqslant \ed_k(X_0)$. Furthermore, if $k$ is of infinite transcendence degree over its prime field (in particular, if $k$ is uncountable), then set of those $b\in B(k)$ such that $\ed_k(\mc{X}_b)\geqslant \ed_k(X_0)$ is Zariski dense in $B$.
	\end{thm}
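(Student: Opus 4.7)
The plan is to reduce the setting to one in which \Cref{verygen} applies, and then to use the Specialization Property (\Cref{ed-dvr}) to bound the essential dimension of the generic geometric fiber from below by $\ed_k(X_0)$. The main step is a ``surgery'' on $\mc{X}$ that leaves the generic fiber intact while replacing the fiber over $b_0$ by $Y_0:=X_0\cap W$, where $W\subset \mc{X}$ is the (open, $G$-invariant) locus of flatness of $f$. Since $f$ is flat at $x_0$, the set $W$ contains $x_0$; its fiber $W_{b_0}$ is closed in $W$ as the preimage of the closed point $b_0\in B(k)$, and $W_{b_0}\setminus X_0$ is closed in $W_{b_0}$, so
\[\mc{X}^{\star}\;:=\;W\setminus (W_{b_0}\setminus X_0)\]
is a $G$-invariant open subscheme of $\mc{X}$ with $\mc{X}^{\star}\to B$ flat, $\mc{X}^{\star}_{b_0}=Y_0$, and $\mc{X}^{\star}_{\eta_B}=\mc{X}_{\eta_B}$. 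The subscheme $Y_0$ is $G$-invariant, contains $x_0$, meets every geometric irreducible component of $X_0$ (by $G$-invariance of $Y_0$ and primitivity of $X_0$), and is therefore a primitive generically free $G$-variety birational to $X_0$, so $\ed_k(Y_0)=\ed_k(X_0)$.

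By standard constructibility and semicontinuity arguments, using that the generic fiber of $\mc{X}^{\star}\to B$ is primitive, generically free, and reduced, there is a dense open $U\subseteq B$ over which every geometric fiber of $\mc{X}^{\star}|_U\to U$ is primitive, generically free, and reduced. The morphism $\mc{X}^{\star}|_U\to U$ then satisfies the hypotheses of \Cref{verygen}. Let $N$ denote the maximum of $\ed_{k(\cl{b})}(\mc{X}_{\cl{b}})$ as $\cl{b}$ ranges over the geometric points of $U$. The first assertion of \Cref{verygen} implies that for each $n\geqslant 0$, the set $\{b\in U:\ed_{k(\cl{b})}(\mc{X}_{\cl{b}})\leqslant n\}$ is a countable union of closed subsets of $U$, and the second assertion, which is applicable when $k$ is of infinite transcendence degree over its prime field, implies that the locus of $b\in U(k)$ with $\ed_k(\mc{X}_b)=N$ is Zariski dense in $B$.

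To complete the proof it suffices to show $N\geqslant \ed_k(X_0)$. By \cite[Proposition 7.1.4]{ega2}, there exist a discrete valuation ring $R$ and a morphism $\Spec R\to B$ sending the generic point to $\eta_B$ and the closed point to $b_0$. After passing to the completion and invoking Cohen's Structure Theorem, exactly as in the proof of \Cref{verygen}, we may assume $R\simeq k(s)[[t]]$, so $R\supset k(s)\supset k$. Write $l:=\on{Frac}(R)$. The pullback $\mc{X}^{\star}_R\to\Spec R$ is flat, separated, and $G$-equivariant; its generic fiber $\mc{X}_{\eta_B}\otimes_{k(B)}l$ is primitive and generically free (these properties being preserved under arbitrary field extension), and its special fiber $Y_0\otimes_k k(s)$ is primitive and generically free because $k$ is algebraically closed. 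Applying \Cref{ed-dvr} and then \Cref{rigidity} yields
\[N\;=\;\ed_{\cl{l}}\bigl(\mc{X}_{\eta_B}\otimes_{k(B)}\cl{l}\bigr)\;\geqslant\;\ed_{\cl{k(s)}}\bigl(Y_0\otimes_k\cl{k(s)}\bigr)\;=\;\ed_k(Y_0)\;=\;\ed_k(X_0),\]
as required.

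The main obstacle is that the naive pullback of $\mc{X}\to B$ to $\Spec R$ has special fiber $\mc{X}_{b_0}$, which in general is neither primitive nor generically free (only the open subscheme $X_0$ is known to be), so \Cref{ed-dvr} cannot be applied to it directly. The surgery $\mc{X}^{\star}$ resolves this by cutting out precisely the portion of the $b_0$-fiber of $W$ lying outside $X_0$; what makes this possible is the observation that $W_{b_0}\setminus X_0$, though a priori only closed in the fiber $W_{b_0}$, is automatically closed in all of $W$, since $W_{b_0}$ is.
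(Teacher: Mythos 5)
Your argument is correct and follows essentially the same route as the paper's proof: cut away the part of the $b_0$-fiber outside $X_0$, work inside the flat locus, connect $b_0$ to the generic point of $B$ by a discrete valuation ring and apply the Specialization Property (\Cref{ed-dvr}) together with \Cref{rigidity} to bound the geometric generic fiber from below, then conclude with \Cref{verygen} over a dense open of $B$; the only difference is that you perform the flat-locus surgery once over $B$ before base change, whereas the paper invokes generic flatness over $B$ and restricts to the flat locus after pulling back to the DVR. The one point worth making explicit is that $\mc{X}^{\star}_b$ coincides with $\mc{X}_b$ only over the dense open subset of $B$ where $f$ is fiberwise flat, so the conclusions obtained for the fibers of $\mc{X}^{\star}$ should be transferred to those of $\mc{X}$ after intersecting with that open set -- harmless for "very general'' and density statements, and no more is left implicit here than in the paper's own treatment of the spreading-out of primitivity and generic freeness from the generic fiber.
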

	
	Here, as usual, ``very general" means ``away from a countable union of proper subvarieties". 
	If $k$ has infinite transcendence degree over its prime field, 
	one may replace the assumption that $\mc{X}_{k(B)}\to \Spec k(B)$ is generically free and primitive with the assumption 
	that a general fiber of $f$ is generically free and primitive. 

	\begin{proof}
		Let $Z:=\mc{X}_{b_0}\setminus X_0$. Then $Z$ is a $G$-invariant closed subscheme of $X$, and the restriction $f|_{\mc{X}\setminus Z}:\mc{X}\setminus Z\to B$ is a flat $G$-equivariant morphism, whose fiber at $b_0$ is a generically free irreducible $G$-variety. Replacing $\mc{X}$ by $\mc{X}\setminus Z$, we may thus assume that $\mc{X}_{b_0}$ is a generically free primitive $G$-variety. 
		
		By generic flatness \cite[Tag 0529]{stacks-project}, there exists a dense open subscheme $U\subset B$ such that 
		$f$ is flat over $U$. If $b_0\in U(k)$, the conclusion follows from \Cref{verygen}. 
		We may thus assume that $b_0$ does not lie in $U$, and let $u\in U$ be the generic point of $U$. Since $U$ is dense in $B$, $b_0$ is a specialization of $u$. By \cite[Proposition 7.1.9]{ega2}, there exist a discrete valuation ring $R$ and a separated morphism $\Spec R\to B$ mapping the closed point $s$ of $\Spec R$ to $b_0$ and the generic point $\eta$ of $\Spec R$ to $u$.
		
		Since $f$ is flat at $x_0$ and $G$ acts primitively on on $\mc{X}_{b_0}$, by the openness of the flat locus \cite[Tag 0399]{stacks-project}, 
		the flat locus of the base change $f_R \colon \mc{X}_R\to \Spec R$ is dense in the component of the special fiber of $f_R$ containing 
		(the preimage of) $x_0$. Since we are assuming that the generic fiber is primitive, we conclude that the flat locus of $f_R$ is dense 
		in the generic fiber. Therefore, after removing the complement of the flat locus from $\mc{X}_R$, we may thus assume that $f_R$ is flat. 
		Let $\cl{\eta}$ and $\cl{s}$ be geometric points lying above $\eta$ and $s$, respectively. By \Cref{ed-dvr}(b), we have
		\[\ed_{k(\cl{\eta})}(\mc{X}_{\cl{\eta}})\geqslant \ed_{k(\cl{s})}((X_0)_{\cl{s}}).\] 
		On the other hand, by~\Cref{rigidity},
		\[ \ed_{k(\cl{s})}((X_0)_{\cl{s}})  \geqslant \ed_k(X_0).\] 
		Combining these two inequalities, we see that $\ed_{k(\cl{\eta})}(X_{\cl{\eta}})  \geqslant \ed_k(X_0)$. The conclusion now follows from an application of \Cref{verygen} to the restriction of $f$ to $f^{-1}(U)$. 
	\end{proof}	
	
	\begin{proof}[Conclusion of the proof of \Cref{thm.free}]
	

As we mentioned at the beginning of this section, in the course of proving \Cref{thm.free} we may assume that $e \geqslant 1$.

     We will now reduce the theorem to the case, where $X$ is incompressible, i.e., $d = e + \dim(G)$. Indeed, by the definition of essential dimension there exists a $G$-equivariant dominant rational map $X \dasharrow X'$ such that 
     $\dim(X') = d' := e + \dim(G)$.
     Suppose we know that \Cref{thm.free} holds for $X'$. In other words,
     there exists a $G$-variety $Y'$ such that $\dim(Y') = d'$, $\ed(Y') = e$, 
     and $Y'$ is the total space of a $G$-torsor $t \colon Y' \to P'$ over a smooth projective variety $P$. Clearly
     $d \geqslant d'$. Let $Y = \P^{d - d'} \times Y'$, where $G$ acts trivially. Then $\dim(Y) = d$, $\ed(Y) = \ed(Y') = e$ 
     (see, e.g., \cite[Lemma 3.3(d)]{reichstein2000} and $Y$ is a $G$-torsor
     ${\rm id} \times t \colon Y = \P^{d - d'} \times Y' \to \P^{d-d'} \times P$ over the smooth projective variety $\P^{d - d'} \times P$, as desired.
     
     From now on we will assume that $d  = e + \dim(G)$ and $e \geqslant 1$.
	 By \cite[Theorem 1.1]{chernousov2006resolving}, there exists a finite subgroup $S$ of $G$, such that for every field extension $K/k$ the natural map $H^1(K,S)\to H^1(K,G)$ is surjective. In particular, there exists a generically free primitive $S$-variety $X_0$ such that $X_0 \times^{S} G$ is birationally equivalent to $X$. Note that $\dim(X_0) = \dim(X) - \dim(G) = d - \dim(G)$. 
		
		Let $f \colon \mc{X} \to B$ be a family obtained by applying \Cref{complete-int} to the finite group $S$ and the $S$-variety $X_0$. This is possible because $S$ is a finite group, $X_0$ is primitive, hence equidimensional, and $e \geqslant 1$. Note also that since $k$ is algebraically closed,
		the assumption of \Cref{complete-int} that $k$-points should be dense in $X_0$ is automatically satisfied here.
		
		Recall that the family $f \colon \mc{X} \to B$ supplied to us by \Cref{complete-int} has the following properties:
		$f$ is smooth of constant relative dimension $e = \dim(X_0)$, 
		$S$ acts freely on $\mc{X}$, and there is a dense open subscheme $U \subset B$ 
		for any $b \in U(k)$ the fiber $\mc{X}_b$ is smooth, projective and irreducible.
		Moreover, $X_0$ is $S$-equivariantly birationally isomorphic to a union of 
		components of the fiber $\mc{X}_{b_0}$ for some $b_0 \in B(k)$. 
		
		Set $\mc{X}' := \mc{X} \times^{S} G$ and let 
		$f' \colon \mc{X}' \to B$ be the natural projection induced by $f$. 	
		Then $f'$ is a smooth morphism of constant relative dimension 
		$d = e + \dim(G)$, $G$ acts freely on $\mc{X}'$, $\mc{X}'_b$ is a primitive $G$-variety for 
		every $b \in U(k)$, the $G$-action on $\mc{X}'_b$ is strongly unramified, and $X$ 
		is $G$-equivariantly birationally isomorphic to a union of irreducible components of $\mc{X}'_{b_0}$.
		Clearly 
		\[ \ed(\mc{X}'_b) \leqslant \dim ( \mc{X}'_b) - \dim(G) = \dim(X) = d - \dim(G) = e . \]  
		On the other hand, since $k$ is algebraically closed and of infinite transcendence degree over its prime field, by \Cref{verygen-sing} there exists a $k$-rational point $b$ of $B$ such that
		$\ed_k(\mc{X}'_b) \geqslant \ed_k(X) = e$. 
		Setting $Y := \mc{X}'_b$, we obtain a generically free primitive $G$-variety $Y$ such that  $\dim(Y) = d$, $\ed_k(Y) = e$, and the $G$-action on $Y$
		is strongly unramified, as desired.
	\end{proof}
	
	\begin{cor} \label{thm.free2}
		Let $G$ be a linear algebraic group over an algebraically closed field $k$ of good characteristic and of infinite transcendence degree over its prime field.
		Then there exists an unramified generically free $G$-variety $Y$ such that
		\begin{enumerate}
			\item $\on{dim}(Y)=\ed_{k}(G)+\dim G$, and
			\item $\ed_k(Y)=\ed_k(G)$.
		\end{enumerate}	
	\end{cor}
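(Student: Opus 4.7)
The plan is to apply \Cref{thm.free} to an incompressible versal $G$-variety, constructed from a generically free linear representation of $G$.

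Set $e := \ed_k(G)$. I would begin by choosing a generically free linear representation $V$ of $G$, which exists for any linear algebraic group. Then $V$ is a primitive (indeed, irreducible as a variety) generically free $G$-variety, and it is versal in the sense of \Cref{sect.prelim1}; by the standard fact that a versal $G$-variety attains the maximum essential dimension, $\ed_k(V) = e$.

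Next, the definition of $\ed_k(V)$ supplies a $G$-equivariant dominant rational map $V \dashrightarrow X$, where $X$ is a primitive generically free $G$-variety realizing the minimum $\dim(X) = e + \dim(G)$; this is the ``incompressible'' situation used at the beginning of the proof of \Cref{thm.free}. Any further compression of $X$ would compose with $V \dashrightarrow X$ to yield a strictly smaller compression of $V$, contradicting the minimality of the first one, so $\ed_k(X) = \ed_k(V) = e$.

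Finally, applying \Cref{thm.free} to this $X$ produces a strongly unramified $G$-variety $Y$ with $\dim(Y) = \dim(X) = \ed_k(G) + \dim(G)$ and $\ed_k(Y) = \ed_k(X) = \ed_k(G)$, yielding both conclusions (1) and (2). No serious obstacle is expected: the corollary is essentially a direct repackaging of \Cref{thm.free} applied to a versal $G$-variety, with the dimension equality following from the incompressibility condition $\dim(X) = \ed_k(X) + \dim(G)$.
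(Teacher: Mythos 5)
Your proposal is correct and follows essentially the same route as the paper: take a generically free $G$-variety of essential dimension $\ed_k(G)$, compress it to an incompressible $X$ of dimension $\ed_k(G)+\dim(G)$, and apply \Cref{thm.free} to $X$. The only difference is cosmetic: the paper simply posits such a variety $V$ with $\ed_k(V)=\ed_k(G)$, whereas you make its existence explicit via a generically free (versal) linear representation.
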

	
    \begin{proof} Let $V$ be a generically free $G$-variety over $k$ of essential dimension $e = \ed_k(G)$.
    Then there is a $G$-compression $V \dasharrow X$, where $X$ is a generically free $G$-variety of dimension $e + \dim(G)$.
    Clearly $\ed_k(X) \leqslant e$; on the other hand, $\ed_k(X, G) \leqslant \ed_k(V, G) = e = \dim(X)$. We conclude
    that $\dim(X) = e + \dim(G)$ and $\ed_k(X) = e$. By Theorem~\ref{thm.free} there exists an unramified 
    generically free variety $Y$ of dimension $e + \dim(G)$ and essential dimension $e$.
    \end{proof}

\section{Finite group actions on hypersurfaces}

We conclude this section with yet another application of~\Cref{verygen-sing}.

	\begin{prop} \label{ex.hypersurface}
		Let $k$ be an algebraically closed field of infinite transcendence degree over its prime field, $G$ be a finite group, $W$ be a faithful $k$-representation of $G$, and set $V:=W\oplus k$, where $G$ acts trivially on $k$. Then for every $d\geqslant 1$, a very general polynomial $f\in (k[V]_d)^G$ cuts out an affine hypersurface $Z(f)$ of essential dimension $\ed_k(Z(f) ;G)=\ed_k(G)$.
	\end{prop}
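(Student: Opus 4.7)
The upper bound $\ed_k(Z(f);G) \leqslant \ed_k(G)$ will be immediate from the definition of $\ed_k(G)$, once we verify that $Z(f)$ is a generically free primitive $G$-variety for very general $f$. The plan is then to establish the matching lower bound by applying \Cref{verygen-sing} to the universal family of $G$-invariant degree-$d$ hypersurfaces in $V$.

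Explicitly, set $B := (k[V]_d)^G \setminus \set{0}$ and let $\pi\colon \mc{X}\to B$ be the projection from the incidence variety $\mc{X} := \set{(v,f)\in V\times B : f(v)=0}$, equipped with the $G$-action induced from $V$ (trivial on $B$). Since $\mc{X}$ is cut out by the single $G$-invariant equation $b(v)=0$ in $V\times B$ and every fiber of $\pi$ has pure dimension $\dim V -1$, the projection $\pi$ is flat. A Bertini-type argument together with the faithfulness of $W$ shows that over a dense open subset of $B$ the fiber $Z(f)$ is an irreducible reduced generically free $G$-variety, so the generic fiber of $\pi$ satisfies the hypotheses of \Cref{verygen-sing}.

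The key step will be to produce a specific $b_0\in B$, a $k$-point $x_0\in \mc{X}_{b_0}$, and a reduced $G$-invariant open $X_0\c \mc{X}_{b_0}$ containing $x_0$ with $\ed_k(X_0;G)=\ed_k(G)$, so that \Cref{verygen-sing} will yield $\ed_k(Z(f);G)\geqslant \ed_k(G)$ for very general $f$. My candidate is $f_0 := t\cdot q$, where $t$ is the $G$-invariant linear coordinate on the trivial summand of $V=W\oplus k$ and $q \in (k[V]_{d-1})^G$ is any polynomial whose restriction $q|_W$ is not identically zero. With such a $q$, $Z(f_0) = W \cup Z(q)$ as reduced subschemes, and near a point $x_0\in W$ with $q(x_0)\neq 0$ the local equation of $Z(f_0)$ is a unit multiple of $t$; hence $\mc{X}_{f_0}$ is smooth (in particular reduced) at $x_0$ and $\pi$ is flat there. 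Setting $X_0 := W\setminus Z(q)$ yields a dense $G$-invariant open of the faithful linear $G$-representation $W$, and since $W$ is versal for the finite group $G$, we have $\ed_k(X_0;G)=\ed_k(W;G)=\ed_k(G)$ as required.

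The main obstacle is to guarantee the existence of $q\in (k[V]_{d-1})^G$ with $q|_W\not\equiv 0$, equivalently the nonvanishing of $(k[W]_{d-1})^G$. This is automatic for $d=1$ (take $q=1$) and whenever $k[W]^G$ contains invariants in degree at most $d-1$. In the remaining small-$d$ cases $(k[V]_d)^G$ reduces to the line $k\cdot t^d$, every nonzero $f$ cuts out the hyperplane $W$ (possibly with multiplicity), and the identity $\ed_k(Z(f);G)=\ed_k(W;G)=\ed_k(G)$ follows immediately from the versality of $W$.
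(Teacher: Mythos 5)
Your main construction is essentially the one in the paper: the paper also works with the universal family $\mc{X}\to B=\A((k[V]_d)^G)\setminus\set{0}$, notes its flatness, degenerates to a special member whose fiber contains a reduced copy of the faithful (hence versal) representation $W$ as a component, and then applies \Cref{verygen-sing} together with \cite[Proposition 3.11]{merkurjev2013essential}, the upper bound being immediate. The only real difference is the choice of special member: the paper takes $f_0=f_0(x_{n+1})$, a degree-$d$ polynomial in the invariant coordinate with $d$ distinct roots, so that $Z(f_0)$ is a disjoint union of $d$ translates of $W$, and removes all components of that fiber but one, whereas you take $f_0=t\cdot q$ and $X_0=W\setminus Z(q)$. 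Up to that point your argument is sound.

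The gap is in your final paragraph. When $(k[W]_{d-1})^G=0$ (your exceptional case) it is not true that $(k[V]_d)^G=k\,t^d$: for example, if $G=\Z/2\Z$ acts on $W=k$ by the sign character and $d$ is even, then $(k[W]_{d-1})^G=0$ while $x^d,\,x^{d-2}t^2,\dots$ are invariant, so the very general member is not a multiple of the hyperplane $W$ and your fallback argument gives no lower bound for it. Moreover, this case cannot be repaired within a strictly homogeneous reading of $k[V]_d$: for $d=2$ the very general invariant $ax^2+bt^2$ cuts out two lines interchanged by $G$, which compress $G$-equivariantly onto $\Spec(k\times k)$ with the swap action, so the essential dimension is $0\neq 1=\ed_k(\Z/2\Z)$. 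The paper's own proof makes clear that the intended family is that of affine hypersurfaces of degree $d$, not necessarily homogeneous (its $f_0$, having $d$ distinct roots in $x_{n+1}$, is not homogeneous for $d\geqslant 2$); with that reading your exceptional case never arises, since one may always take $q$ whose restriction to $W$ is a nonzero constant (e.g.\ $q=1+t^{d-1}$), or simply use the paper's $f_0$. Two smaller points: your remark that a suitable $q$ exists ``whenever $k[W]^G$ contains invariants in degree at most $d-1$'' fails in the homogeneous setting, because an invariant of degree $j<d-1$ must be multiplied by $t^{d-1-j}$ and then restricts to $0$ on $W$; and your Bertini claim that the general fiber is irreducible is asserted rather than proved (what \Cref{verygen-sing} actually requires is that the generic fiber be primitive and generically free), though the paper is equally brief on this point.
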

	
Here as usual, $k[V]_d$ denotes the vector space of homogeneous polynomials of degree $d$ on $V$, and $(k[V]_d)^G$ denotes the subspace of $G$-invariant polynomials in $k[V]_d$.
	
	\begin{proof}
		Let $n:=\dim W$, let $x_1,\dots,x_n$ be coordinates on $W$, and let $x_{n+1}$ be a coordinate on $k$. We fix a polynomial function $f_0$ of degree $d$ on $W\oplus k$ which only depends on $x_{n+1}$, $f_0=f_0(x_{n+1})$, and which has distinct roots. Then $f_0\in (k[V]_d)^G\setminus\set{0}$ and $H_{f_0}$ is a disjoint union of $d$ translates of $W$.
		
		Let $B$ be the complement of the origin in $\A((k[V]_d)^G)$. Consider the commutative triangle
		\[
		\begin{tikzcd}
			\mc{X} \arrow[hook,r]  \arrow[dr] & \A(V)\times B \arrow[d,"\on{pr}_2"] \\
			& B,
		\end{tikzcd}
		\]\noindent
		where $\mc{X}$ is the reduced closed subscheme whose $k$-points parametrize pairs $(v,f)\in V\times k[V]^G_d$ such that $f \neq 0$ and $f(v)=0$. 
		By definition the fiber $\mc{X}_f$ over $f \in B$ is the hypersurface $Z(f)$ in the affine space $\mathbb A(V)$. In other words, 
		the morphism $\mc{X}\to B$ is the restriction to $B$ of the universal family of affine hypersurfaces in $V$ of degree $d$. In particular, it is flat.
		
			Every irreducible component of $\mc{X}_{[f_0]}$ is $G$-equivariantly isomorphic to $W$
		and so has essential dimension equal to $\ed_k(G)$; see~\cite[Proposition 3.11]{merkurjev2013essential}.
		Let $\mc{X}'\subset \mc{X}$ be the Zariski open subvariety obtained by removing all irreducible components of $\mc{X}_{[f_0]}$ but one.
		Since the inclusion map $\mc{X}' \hookrightarrow \mc{X}$ is an open embedding, and open embeddings are flat, we conclude that 
		the composition $\mc{X}' \hookrightarrow \mc{X} \to B$ is also flat. 
		Thus for a very general $f \in (k[V]_d)^G$, we have
		\[ \ed_k(Z(f)) = \ed_k(\mc{X}_{[f]}) = \ed_k(\mc{X}'_{[f]}) \geqslant \ed_k(\mc{X}'_{[f_0]}) = \ed_k(G) \]
		where the inequality follows from \Cref{verygen-sing}. The opposite inequality, $\ed_k(Z(f)) \leqslant \ed_k(G)$
		is immediate from the definition of $\ed_k(G)$.
	\end{proof}

    \section{Appendix: Essential dimension at a prime}

    The purpose of this appendix is to point out that \Cref{ed-dvr}, \Cref{verygen} and~\Cref{thm.free} continue to hold if we replace
    essential dimension with essential dimension at a prime $q$. The proofs are largely unchanged. One notable feature of these results
    is that they hold for any prime $q \neq \Char(k)$, where $k$ is a base field. \Cref{ed-dvr} and Theorems~\ref{thm.free} and \ref{verygen}
    assume that $k$ is a field of good characteristic; this assumption is not needed here. 

    	\begin{prop}\label{ed-dvr-prime}
	Let $k$ be an algebraically closed field, $R$ be a discrete valuation ring containing $k$ and with residue field $k$, and $l$ be the fraction field of $R$. Let $G$ be a linear algebraic group over $k$ and $q$ be a prime number invertible in $k$. Let $X$ be a flat separated $R$-scheme of finite type endowed with a $G$-action over $R$, whose fibers are generically free and primitive $G$-varieties. 
	Then $\ed_{\cl{l},q}(X_{\cl{l}})\geqslant \ed_{k,q}(X_{k})$.
	\end{prop}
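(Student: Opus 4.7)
The plan is to follow the proof of \Cref{ed-dvr} essentially verbatim, with ordinary essential dimension replaced throughout by essential dimension at the prime $q$. Since $k$ is already algebraically closed in the hypothesis, the base-change reductions to $\cl{k}$ on the residue side that appear in the proof of \Cref{ed-dvr} simplify away; only the passage to $\cl{l}$ on the generic side remains. The heart of the argument is to reduce to the case where $X \to \Spec A$ is a $G$-torsor over a discrete valuation ring $A$ with $R\subset A$ a local inclusion, and then invoke the $\ed_q$-analog of \cite[Theorem 6.4]{specialization1}.

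First I would reproduce the geometric reductions. Claim \ref{fiberdimension} — that the irreducible components of $X_k$ and of $X_l$ all have the same dimension — is purely scheme-theoretic and its proof carries over without change. Claim \ref{lem.free} — producing a $G$-invariant $R$-fiberwise dense open subscheme on which $G$ acts freely — also carries over, since it uses only the scheme-theoretic structure of the stabilizer group scheme and the hypothesis that $G$ is generically free on each fiber. Passing to the fppf quotient $Y := X/G$ (an algebraic space by Artin's theorem, then a scheme after an \'etale base change), and using \cite[Tag 02JZ]{stacks-project} to preserve $R$-flatness, one reduces to the case where $X \to Y$ is an \'etale $G$-torsor over an $R$-scheme of finite type.

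Next, as in \Cref{torsor-over-dvr}, I would use \cite[Tag 054F]{stacks-project} to produce a local map $\Spec A \to Y$ from a discrete valuation ring $A$, sending the generic point to the generic point of $Y$ with equal residue fields, and the closed point to the generic point of $Y_k$. Pulling back the torsor yields $X_A \to \Spec A$, with the same essential-dimension-at-$q$ of the geometric fibers. This invariance under \'etale base change, together with the passage from $l$ to a finite extension $l'$ achieving $\ed_{l',q}(X_{l'}) = \ed_{\cl{l},q}(X_{\cl{l}})$, both require the $q$-primary rigidity property: for an algebraically closed field $F$ and any extension $F'/F$, one has $\ed_{F,q}(Z) = \ed_{F',q}(Z_{F'})$ for any generically free primitive $G$-variety $Z$ over $F$. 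This is the straightforward analog of \Cref{rigidity} (\cite[Lemma 2.2]{specialization1}), which holds with the same proof after replacing $\ed$ by $\ed_q$.

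Finally, having reduced to the case where $X$ is a $G$-torsor over $\Spec A$ for a DVR $A$ with algebraically closed residue field, I would apply the $\ed_q$-version of \cite[Theorem 6.4]{specialization1} to conclude $\ed_{l,q}(X_l) \geqslant \ed_{k,q}(X_k)$. Chaining this with the rigidity identity $\ed_{\cl{l},q}(X_{\cl{l}}) = \ed_{l,q}(X_l)$ gives the desired inequality. The only substantive new input relative to \Cref{ed-dvr} is the availability of that specialization result for essential dimension at $q$ without the good-characteristic hypothesis on $G$; here the single condition $q \neq \Char(k)$ suffices, which is precisely the reason the appendix can drop the good-characteristic restriction. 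The reductions themselves are mechanical, and I expect the main obstacle to be bookkeeping the $q$-primary versions of the cited results from \cite{specialization1} rather than any new geometric difficulty.
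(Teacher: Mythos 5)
Your proposal matches the paper's proof: the paper simply runs the argument of \Cref{ed-dvr} with $\ed$ replaced by $\ed_q$ throughout, invoking the essential-$q$-dimension specialization theorem of \cite{specialization1} (Theorem 11.1 there) in place of the ordinary one, exactly as you describe. The reductions (same fiber dimension, free locus, passage to a torsor over a DVR, $q$-rigidity) carry over as you indicate, so the proposal is correct and essentially identical to the paper's argument.
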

	
	\begin{proof}
	The proof is the same as that of \Cref{ed-dvr}, except that instead of the~\cite[Theorem~1.2]{specialization1}, one should use 
	\cite[Theorem~11.1]{specialization1} which gives an analogous assertion for essential $q$-dimension.
	\end{proof}

    \begin{thm}\label{verygen-prime}
		Let $G$ be a linear algebraic group over an algebraically closed field $k$ and let $q$ be a prime number invertible in $k$. Let $B$ be a noetherian $k$-scheme, $f \colon \mc{X}\to B$ be a flat separated $G$-equivariant morphism of finite type such that $G$ acts trivially on $B$ and the geometric fibers of $f$ are generically free and primitive $G$-varieties (in particular, reduced). Then for any fixed integer $n \geqslant 0$ the subset of $b\in B$ such that $\ed_{k(\cl{b}),q}(\mc{X}_{\cl{b}};G_{k(\cl{b})})\leqslant n$ for every (equivalently, some) geometric point $\cl{b}$ above $b$ is a countable union of closed subsets of $B$.
    \end{thm}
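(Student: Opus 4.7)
The plan is to adapt the proof of \Cref{verygen}(a) essentially verbatim, substituting \Cref{ed-dvr-prime} for \Cref{ed-dvr} throughout, and using the corresponding rigidity property for essential $q$-dimension in place of \Cref{rigidity}. The rigidity statement we need is: for any algebraically closed field $k$, any $k$-group $G$, any generically free primitive $G$-variety $X$ over $k$, and any field extension $l/k$, one has $\ed_{k,q}(X) = \ed_{l,q}(X_l)$; this is the prime analogue of \Cref{rigidity} and can be extracted from~\cite{specialization1} by the same argument as \cite[Lemma 2.2]{specialization1}.

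First I would set
\[
\Phi^q_f(n) := \set{b \in B : \ed_{k(\cl{b}),q}(\mc{X}_{\cl{b}};G_{k(\cl{b})}) \leqslant n \text{ for some geometric point } \cl{b} \text{ above } b}
\]
and prove the analogue of \Cref{phi}: namely that "some" can be replaced by "every" in the definition, and that $\Phi^q_{f'}(n) = \pi^{-1}(\Phi^q_f(n))$ under base change $\pi \colon B' \to B$. Both statements reduce immediately to the rigidity property above, exactly as in the proof of \Cref{phi}.

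Next I would apply noetherian approximation (as in \cite[IV, \S 8.10]{ega4}) to descend the $G$-action to a subfield $k_0 \subset k$ finitely generated over its prime field, producing a Cartesian diagram of the form~(\ref{spread}). Since $\Phi^q_f(n) = \pi^{-1}(\Phi^q_{f_0}(n))$ and $\pi$ is continuous, it suffices to treat the case where $k$ is finitely generated over its prime field and $B$ is of finite type over $k$. In that case $B$ is countable, so by elementary topology (\cite[Tag 0EES]{stacks-project}) it is enough to show that $\Phi^q_f(n)$ is closed under specialization.

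For this last step, given $b \in \Phi^q_f(n)$ and $b' \in \cl{\set{b}}$, I would use \cite[Proposition 7.1.4]{ega2} to produce a discrete valuation ring $R$ and a morphism $\Spec R \to B$ sending the generic point $\eta$ to $b$ and the closed point $s$ to $b'$; passing to the completion and applying Cohen's Structure Theorem gives $R \simeq k(s)[[t]]$, which in particular contains its residue field. Applying \Cref{ed-dvr-prime} to the pullback of $f$ along $\Spec R \to B$ (with $k$ there replaced by $k(\cl{s})$) yields
\[
\ed_{k(\cl{\eta}),q}(\mc{X}_{k(\cl{\eta})}) \geqslant \ed_{k(\cl{s}),q}(\mc{X}_{k(\cl{s})}),
\]
and combined with the rigidity analogue of \Cref{phi}(a) this gives $\ed_{k(\cl{b}'),q}(\mc{X}_{\cl{b}'}) \leqslant n$, i.e.\ $b' \in \Phi^q_f(n)$. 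The only real obstacle is checking that the rigidity property for $\ed_{-,q}$ is available in the required generality; once that is in hand, every other step of the proof of \Cref{verygen}(a) transfers without modification, and the hypothesis that $G$ be in good characteristic is no longer needed because \Cref{ed-dvr-prime} only requires $q$ to be invertible in $k$.
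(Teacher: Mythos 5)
Your proposal is correct and is exactly the paper's proof: the paper disposes of this theorem in one line, saying the argument of \Cref{verygen} goes through with \Cref{ed-dvr-prime} in place of \Cref{ed-dvr}, which is precisely the substitution you carry out (including the prime analogue of the rigidity lemma, which is indeed available from \cite{specialization1}). Your write-up just makes the transferred steps explicit.
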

	
    \begin{proof}
    The proof is analogous to that of \Cref{verygen}, replacing \Cref{ed-dvr} by \Cref{ed-dvr-prime}.
    \end{proof}

    \begin{thm}
    Let $G$ be a linear algebraic group over an algebraically closed field
		$k$ of infinite transcendence degree over its prime field, $q$ be a prime number invertible in $k$, and $X$ be a generically free primitive $G$-variety.
		Then there exists an irreducible smooth projective variety $Z$ and a $G$-torsor $Y \to Z$ such that $\dim(Y) = \dim(X)$ and $\ed_{k,q}(Y) = \ed_{k,q}(X)$.
    \end{thm}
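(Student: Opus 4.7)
The plan is to follow the proof of \Cref{thm.free} step by step, replacing each ingredient that depends on good characteristic by its prime-$q$ analog. First I would establish the prime-$q$ analog of \Cref{verygen-sing}: under the same hypotheses, with $\ed_k$ replaced throughout by $\ed_{k,q}$, a very general fiber $\mc{X}_b$ satisfies $\ed_{k,q}(\mc{X}_b) \geqslant \ed_{k,q}(X_0)$. The proof is formally identical to that of \Cref{verygen-sing}, with \Cref{ed-dvr-prime} replacing \Cref{ed-dvr} and \Cref{verygen-prime} replacing \Cref{verygen}; the Rigidity Property (\Cref{rigidity}) and the flat-locus/DVR-lifting steps are unchanged, and $q$ being invertible in $k$ is exactly what is required for \Cref{ed-dvr-prime} to apply.

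Set $d := \dim(X)$ and $e := \ed_{k,q}(X)$. If $e = 0$, take $Y := G \times \P^d$ with $G$ acting by translation on the first factor and trivially on $\P^d$. If $e \geqslant 1$, reduce to the case $d = e + \dim(G)$ as in the proof of \Cref{thm.free}: pick a $G$-compression $X \dashrightarrow X'$ realizing the essential $q$-dimension with $\dim(X') = e + \dim(G)$ (possibly after a prime-to-$q$ extension), apply the theorem to $X'$ to get $Y'$, and take $Y := Y' \times \P^{d - \dim(X')}$ with trivial $G$-action on the projective factor. Next, apply the Chernousov--Gille--Reichstein reduction of structure to find a finite subgroup $S \subseteq G$ and a generically free primitive $S$-variety $X_0$ with $X_0 \times^{S} G$ $G$-equivariantly birational to $X$. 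Apply \Cref{complete-int} to $X_0$ with group $S$ to produce a smooth family $f \colon \mc{X} \to B$, and form $f' \colon \mc{X}' := \mc{X} \times^{S} G \to B$, a smooth family of strongly unramified generically free primitive $G$-varieties of relative dimension $d$. The upper bound $\ed_{k,q}(\mc{X}'_b) \leqslant \dim(\mc{X}'_b) - \dim(G) = e$ is automatic, while the prime-$q$ analog of \Cref{verygen-sing} applied to $f'$ produces $b \in B(k)$ with $\ed_{k,q}(\mc{X}'_b) \geqslant e$. Setting $Y := \mc{X}'_b$ completes the proof.

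The main obstacle is the reduction-of-structure step: the CGR theorem used in the proof of \Cref{thm.free} requires good characteristic in order to produce a finite $S$ with $H^1(-, S) \to H^1(-, G)$ surjective, while here we only have $q$ invertible in $k$. The right replacement should be a weaker prime-to-$q$ surjectivity statement for the cohomology map, which is still sufficient because essential $q$-dimension is preserved by prime-to-$q$ field extensions; this is where the argument most substantively departs from the proof of \Cref{thm.free}, and is the only step requiring genuine care.
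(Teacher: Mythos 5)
Your proposal follows essentially the same route as the paper, whose entire proof of this statement is the one-line remark that the argument for \Cref{thm.free} goes through verbatim with $\ed_k$ replaced by $\ed_{k,q}$, using \Cref{ed-dvr-prime} and \Cref{verygen-prime} (and hence the prime-$q$ variant of \Cref{verygen-sing}) in place of \Cref{ed-dvr} and \Cref{verygen}. Your closing observation about the reduction-of-structure step is a reasonable extra precaution rather than a departure: in characteristic zero (where good characteristic is automatic) the Chernousov--Gille--Reichstein step is unchanged, and in positive characteristic the paper's terse ``analogous'' leaves exactly this point implicit, so your suggestion of a prime-to-$q$ surjectivity statement combined with the invariance of $\ed_{k,q}$ under prime-to-$q$ extensions is in the spirit of what the authors intend.
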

    
    \begin{proof}
    Analogous to that of \Cref{thm.free}, replacing \Cref{verygen} by \Cref{verygen-prime}.
    \end{proof}

	\section*{Acknowledgements}
	We are grateful to Najmuddin Fakhruddin for helpful discussions. The second-named author thanks K{\fontencoding{T1}\selectfont\k{e}}stutis \v{C}esnavi\v{c}ius and the D\'epartement de Math\'ematiques d'Orsay (Universit\'e Paris-Saclay) for hospitality during Summer 2021, and the Institut des Hautes \'Etudes Scientifiques for hospitality in the Fall 2021.


\begin{thebibliography}{ABC99}
		
				\bibitem[Ana73]{sivaramakrishna1973schemas}
		S. Anantharaman.
		\newblock Sch\'emas en groupes, espaces homog\`enes et espaces alg\'ebriques
		sur une base de dimension 1.
		\newblock In {\em Sur les groupes alg\'ebriques}, number~33 in M\'emoires de la
		Soci\'et\'e Math\'ematique de France, pages 5--79. Soci\'et\'e math\'ematique
		de France, 1973.
		
		
		\bibitem[BRV07]{brosnan2007-arxiv}
		P. Brosnan, Z. Reichstein, and A. Vistoli.
		\newblock Essential dimension and algebraic stacks.
		\newblock {\em arXiv preprint arXiv:math/0701903}, 2007.
		
		
		\bibitem[BRV18]{brosnan2018essential}
		P. Brosnan, Z. Reichstein, and A. Vistoli.
		\newblock Essential dimension in mixed characteristic.
		\newblock {\em Doc. Math.}, 23:1587--1600, 2018.
		
		
		
		\bibitem[CGR06]{chernousov2006resolving}
		V. Chernousov, Ph. Gille, and Z. Reichstein.
		\newblock Resolving $G$-torsors by abelian base extensions.
		\newblock {\em Journal of Algebra}, 296(2):561--581, 2006.
		
		\bibitem[CGR08]{chernousov2008reduction}
		V. Chernousov, Ph. Gille, and Z. Reichstein.
		\newblock Reduction of structure for torsors over semilocal rings. 
		\newblock {\em Manuscripta Math.} 126 (2008), no. 4, 465-480.
		
		
		\bibitem[CT02]{colliot2002exposant}
		J.-L. Colliot-Th\'{e}l\`ene.
		\newblock Exposant et indice d'alg\`ebres simples centrales non ramifi\'{e}es.
		\newblock {\em Enseign. Math. (2)}, 48(1-2):127--146, 2002.
		\newblock With an appendix by Ofer Gabber.
		
		
		
		
		\bibitem[Eis95]{eisenbud}
		D. Eisenbud, 
		\newblock {\it Commutative algebra}.
		\newblock  Springer-Verlag, New York, 1995. MR1322960
		\newblock Graduate Texts in Mathematics, No. 150.
		
		\bibitem[FS21]{fakhruddin2021finite}
		N. Fakhruddin and R. Saini.
		\newblock Finite groups scheme actions and incompressibility of galois covers:
		beyond the ordinary case.
		\newblock {\em arXiv preprint arXiv:2102.05993}, 2021.
		
		\bibitem[FKW21a]{fkw}
		B. {Farb}, M. {Kisin}, and J. {Wolfson}.
		\newblock {The Essential Dimension of Congruence Covers}.
		\newblock {\em Compositio Mathematica}, 157(11), 2407--2432, 2021.
		
		\bibitem[FKW21b]{fkw2}
		B. {Farb}, M. {Kisin}, and J. {Wolfson}.
		\newblock {The Essential Dimension via Prismatic Cohomology}.
		\newblock {\em arXiv e-prints}, arXiv:2110.05534 , 2021.
		
		
		\bibitem[GMS03]{garibaldi2003cohomological}
		S. Garibaldi, A. Merkurjev, and J.-P. Serre.
		\newblock {\em Cohomological invariants in {G}alois cohomology}, volume~28 of
		{\em University Lecture Series}.
		\newblock American Mathematical Society, Providence, RI, 2003.
		
		
		\bibitem[GD61]{ega2}
		A. Grothendieck and J. Dieudonn{\'e}.
		\newblock {\em {\'E}l{\'e}ments de g{\'e}om{\'e}trie alg{\'e}brique {II}},
		volume~8 of {\em Publications {M}ath{\'e}matiques}.
		\newblock Institute des {H}autes {\'E}tudes {S}cientifiques., 1961.
		
		\bibitem[GD64]{ega4}
		A. Grothendieck and J. Dieudonn{\'e}.
		\newblock {\em {\'E}l{\'e}ments de g{\'e}om{\'e}trie alg{\'e}brique {IV}},
		volume 20, 24, 28, 32 of {\em Publications {M}ath{\'e}matiques}.
		\newblock Institute des {H}autes {\'E}tudes {S}cientifiques., 1964-1967.
		
		
		%
		
		\bibitem[Lam05]{lam2005introduction}
		T.~Y. Lam.
		\newblock {\em Introduction to quadratic forms over fields}, volume~67 of {\em
			Graduate Studies in Mathematics}.
		\newblock American Mathematical Society, Providence, RI, 2005.
		
		
		
		\bibitem[Liu02]{liu2002algebraic}
		Q. Liu.
		\newblock {\em Algebraic geometry and arithmetic curves}, volume~6 of {\em
			Oxford Graduate Texts in Mathematics}.
		\newblock Oxford University Press, Oxford, 2002.
		\newblock Translated from the French by Reinie Ern\'{e}, Oxford Science
		Publications.
		
		\bibitem[Mat89]{matsumura1989commutative}
		H. Matsumura.
		\newblock {\em Commutative ring theory}, volume~8 of {\em Cambridge Studies in
			Advanced Mathematics}.
		\newblock Cambridge University Press, Cambridge, second edition, 1989.
		\newblock Translated from the Japanese by M. Reid.
		
		\bibitem[Mer09]{merkurjev2009essential}
		A.~S. Merkurjev.
		\newblock Essential dimension.
		\newblock In {\em Quadratic forms---algebra, arithmetic, and geometry}, volume
		493 of {\em Contemp. Math.}, pages 299--325. Amer. Math. Soc., Providence,
		RI, 2009.
		
		\bibitem[Mer13]{merkurjev2013essential}
		A.~S. Merkurjev.
		\newblock Essential dimension: a survey.
		\newblock {\em Transformation groups}, 18(2):415--481, 2013.
		
		\bibitem[Mil80]{milne1980etale}
		J.~S. Milne.
		\newblock {\em \'{E}tale cohomology}, volume~33 of {\em Princeton Mathematical
			Series}.
		\newblock Princeton University Press, Princeton, N.J., 1980.
		
		
		\bibitem[PV94]{popov1994}
		V.~L. Popov and E.~B. Vinberg.
		\newblock {\em Invariant Theory}, pages 123--278.
		\newblock Springer Berlin Heidelberg, Berlin, Heidelberg, 1994.
		
		
		\bibitem[Rei00]{reichstein2000}
		Z. Reichstein, 
		\newblock 
		On the notion of essential dimension for algebraic groups.
		\newblock
		{\em Transform. Groups}, vol. 5 (3) (2000), 265--304. 
		
		
		
		
		\bibitem[RS21]{specialization1}
		Z. Reichstein and F. Scavia.
		\newblock The behavior of essential dimension under specialization.
		\newblock 
		
		\bibitem[Ser55]{serre1955faisceaux}
		J.-P. Serre.
		\newblock Faisceaux alg\'{e}briques coh\'{e}rents.
		\newblock {\em Ann. of Math. (2)}, 61:197--278, 1955.
		
		\bibitem[Ser58]{serre58charp}
		J.-P. Serre.
		\newblock Sur la topologie des vari\'{e}t\'{e}s alg\'{e}briques en
		caract\'{e}ristique {$p$}.
		\newblock In {\em Symposium internacional de topolog\'{\i}a algebraica
			{I}nternational symposium on algebraic topology}, pages 24--53. Universidad
		Nacional Aut\'{o}noma de M\'{e}xico and UNESCO, Mexico City, 1958.
		
		\bibitem[Ser79]{serre1979local}
		J.-P. Serre.
		\newblock {\em Local fields}.
		\newblock Translated from the French by Marvin Jay Greenberg. Graduate Texts in Mathematics, 67. Springer-Verlag, New York-Berlin, 1979. viii+241 pp. ISBN: 0-387-90424-7 
		
		
		
		\bibitem[Sil09]{silverman2009arithmetic}
		J.~H. Silverman.
		\newblock {\em The arithmetic of elliptic curves}, volume 106 of {\em Graduate
			Texts in Mathematics}.
		\newblock Springer, Dordrecht, second edition, 2009.
		
		\bibitem[SP]{stacks-project}
		The {Stacks Project Authors}.
		\newblock \textit{Stacks Project}.
		\newblock http://stacks.math.columbia.edu.
		
		\bibitem[TT90]{thomason1990higher}
		R.~W. Thomason and T. Trobaugh.
		\newblock Higher algebraic {$K$}-theory of schemes and of derived categories.
		\newblock In {\em The {G}rothendieck {F}estschrift, {V}ol.\ {III}}, volume~88
		of {\em Progr. Math.}, pages 247--435. Birkh\"auser Boston, Boston, MA, 1990.
		
		
		
	\end{thebibliography}
\end{document}